\newtheorem{thm}{Theorem}[section]
\newtheorem{cor}[thm]{Corollary}
\newtheorem{ex}[thm]{Example}
\newtheorem{lem}[thm]{Lemma}
\newtheorem{prop}[thm]{Proposition}
\newtheorem{rem}[thm]{Remark}
\theoremstyle{definition}
\newtheorem{defn}[thm]{Definition}
\numberwithin{equation}{section}
\newcommand{\Hom}[3]{\operatorname{Hom}_{#1}(#2,#3)}
\newcommand{\ext}[4]{\operatorname{Ext}^{#1}_{#2}(#3,#4)}
\newcommand{\cdd}[3]{\operatorname{cd}_{#1}(#2,#3)}
\newcommand{\T}[2]{\operatorname{T}(#1,#2)}
\newcommand{\Ht}[2]{\operatorname{ht}_{#1}{(#2)}}
\newcommand{\h}[3]{\operatorname{H}^{#1}_{#2}(#3)}
\newcommand{\gam}[2]{\Gamma_{#1}(#2)}
\newcommand{\ann}{\operatorname{Ann}}
\newcommand{\ara}{\operatorname{ara}}
\newcommand{\V}{\operatorname{V}}
\newcommand{\zdv}{\operatorname{Zdv}}
\newcommand\supp {\operatorname{Supp}}
\newcommand\spec{\operatorname{Spec}}
\newcommand\ass {\operatorname{Ass}}
\newcommand\mass {\operatorname{MinAss}}
\newcommand\msupp {\operatorname{MinSupp}}
\newcommand\assh {\operatorname{Assh}}
\newcommand\att{\operatorname{Att}}
\newcommand\matt{\operatorname{MinAtt}}
\newcommand{\grad}[3]{\operatorname{depth}_{#1}(#2,#3)}
\newcommand{\fgrad}[3]{\operatorname{f-grade}(#1,#2,#3)}
\newcommand\depth{\operatorname{depth}}
\newcommand\fa{\mathfrak a}
\newcommand\fb{\mathfrak b}
\newcommand\fm{\mathfrak m}
\newcommand\fn{\mathfrak n}
\newcommand\fp{\mathfrak p}
\newcommand\fq{\mathfrak q}
\newcommand\N{\mathbb N}
\begin{document}

\title[Annihilator of Top Local Cohomology and Lynch's Conjecture]{ Annihilator of Top Local Cohomology  and Lynch's Conjecture}%
\author[A.~fathi]{Ali Fathi}
\address{Department of Mathematics, Zanjan Branch,
Islamic Azad University,  Zanjan, Iran.}
\email{alif1387@gmail.com}

\keywords{ Local cohomology,  annihilator, Lynch's Conjecture}
\subjclass[2010]{13D45, 14B15, 13E05}
\dedicatory{Dedicated to Professor Hossein Zakeri}%
\begin{abstract}
Let $R$ be a commutative Noetherian ring, $\mathfrak a$ a proper  ideal of $R$ and $N$ a non-zero finitely generated  $R$-module with $N\neq \mathfrak a N$. Let $d$ (respectively $c$) be the smallest (respectively greatest)  non-negative integer $i$ such that the local cohomology $\operatorname{H}^i_{\mathfrak a}(N)$  is non-zero.  In this paper, we provide   sharp bounds under inclusion for the annihilators of  the  local cohomology modules
$\operatorname{H}^d_{\mathfrak a}(N)$, $\operatorname{H}^c_{\mathfrak a}(N)$ and  these annihilators are computed  in certain cases. Also,   we construct a counterexample to Lynch's conjecture.
 \end{abstract}
\maketitle
\section{\bf Introduction}
 Throughout this paper, $R$ is a commutative Noetherian ring with non-zero identity.  Let $\fa$ be an ideal of $R$, $N$ an $R$-module and $i$  a non-negative integer. The {\it $i$-th local cohomology } of  $N$ with respect to   $\fa$ was defined by Grothendieck as follows:
$$\h i{\fa}N:={\underset{n\in\N}{\varinjlim}\operatorname{Ext}^i_R(R/\fa^n,  N)};$$
see \cite{bs} and \cite{g} for   more details.

 Throughout this section, let $\fa$ be an ideal of $R$ and $N$  a finitely generated $R$-module.
  We recall that the cohomological dimension (respectively, the depth) of $N$ with respect to $\fa$, denoted by $\cdd R {\fa}N$ (respectively, $\grad{R}{\fa}N$),  is defined as the supremum (respectively, infimum) of the non-negative integers $i$ such that $\h i{\fa}N$ is non-zero.
   The arithmetic rank of $\fa$, denoted by $\textrm{ara}(\fa)$, is the least number of elements of $R$ required to generate an ideal which has the same radical as $\fa$. The $N$-height of $\fa$ is defined as  $\Ht N{\fa}:=\inf\{\dim_{R_\fp}(N_\fp): \fp\in\supp_R(N)\cap\V(\fa)\}$, where $\V(\fa)$ denotes the set of all prime ideals of $R$ containing $\fa$.  We denote the set of minimal elements of $\ass_R(N)$ by $\mass_R(N)$; also, the set of elements $\fp$ of $\ass_R(N)$ with $\dim_R(R/\fp)=\dim_R(N)$ is denoted by $\assh_R(N)$.
   For a  submodule $L$   of $N$ and $\fp\in\supp_R(N)$, we denote the contraction of $L_\fp$ under the canonical map $N\rightarrow N_\fp$ by $C^N_\fp(L)$. Finally, we denote the set of  integers (respectively, positive integers, non-negative integers) by $\mathbb Z$ (respectively, $\N$, $\N_0$). For any unexplained notation and terminology,
we refer the reader to \cite{bs}, \cite{bh} and \cite{mat}.

  We adopt the convention that the intersection (respectively,  union) of  empty family of  subsets of a set $A$ is $A$ (respectively,  the empty set). Also, we adopt the convention that the infimum (respectively, supremum) of  empty set of integers is $\infty$ (respectively, $-\infty$).

Since $\bigcup_{i\in\N_0}\supp_R(\h i{\fa}N)=\supp_R(N/\fa N)$(see \cite[Lemma 2.4]{f2018}), we obtain  $N=\fa N$ if and only if $\h i{\fa}N=0$ for all $i\in\N_0$. In this case we have $\cdd R{\fa}N=-\infty$, $\grad{R}{\fa}N=\infty$ and $\Ht N{\fa}=\infty$ by above convention.

If $R$ is a regular local ring containing a field and $\fa$ a proper ideal of $R$, then it is known that $\h i{\fa}R\neq 0$ if and only if $\h i{\fa}R$ is faithful (i. e., $\ann_R (\h i{\fa}R)=0$).  This was proved by Huneke and Koh in prime characteristic (see \cite[Lemma 2.2]{hk}) and by Lyubeznik in characteristic zero (see \cite[Corrolary 3.5]{ly} and the proof of \cite[Theorem 2.3]{hk}).  Boix and Eghbali provided a characteristic-free proof of this result   in \cite[Theorem 3.6]{be}. This  leads to the following conjecture of Lynch; see \cite[Conjecture 1.2]{l}.

\textbf{Lynch's Conjecture.} If $R$ is a  local ring and $\fa$ a proper ideal of $R$ with $c:=\cdd R {\fa}R>0$, then $\dim_R(R/\ann_R (\h c{\fa}R))=\dim_R(R/\gam {\fa}R)$. In particular, if $\fa$ contains a non-zerodivisor, then $\dim_R(R/\ann_R (\h c{\fa}R))=\dim_R(R)$.

    Let $\fa\neq R$ and $c:=\cdd R {\fa}R$.
     The conjecture is known to be false:  the first counterexample   was constructed  by Bahmanpour
    in \cite[Example 3.2]{b2017} over a  nonequidimensional  local ring of  dimension at least $5$ with cohomological dimension $c=2$. Also,  Singh and Walther
     \cite{sw} provided  a counterexample over a nonequidimensional local ring of dimension $3$ with cohomological dimension $c=2$. In both examples  $\ann_R (\h c{\fa}R)$
      has height zero.  Datta,  Switala and  Zhang, in \cite{dsw}, produced  a counterexample for this conjecture over a regular local ring of mixed characteristic such that
      $\ann_R (\h c{\fa}R)$ is non-zero and $\cdd R {\fa}R=\ara (\fa)$. There are some affirmative answers to Lynch's conjecture.  When   $R$ is either a ring of a prime
      characteristic and $\cdd R {\fa}R=\ara (\fa)$ or that $R$ is pure in a regular ring containing a field, Hochster and Jeffries  proved that  $\ann_R (\h c{\fa}R)$ has
       height zero; see
       \cite[Corollary 2.7 and Theorem 2.9]{hj}.  Also, we note that Datta-Switala-Zhang's example shows that the  result of Hochster and Jeffries does not hold for an arbitrary commutative Noetherian  ring $R$ and ideal $\fa$ with $\cdd R {\fa}R=\ara (\fa)$. Note that if $\dim_R(R/\ann_R (\h c{\fa}R))=\dim_R(R)$, then $\Ht R{\ann_R (\h c{\fa}R)}=0$ and if in addition $c>0$, then $\gam {\fa}R\subseteq\ann_R (\h c{\fa}R)$ and so $\dim_R(R/\ann_R (\h c{\fa}R))=\dim_R(R/\gam {\fa}R)=\dim_R(R)$.

We assume for the remainder of this section that $N\neq\fa N$, $c:=\cdd R{\fa}N$ and   $0=N_1\cap\dots\cap N_n$ is a minimal primary decomposition of the zero submodule of $N$
with $\ass_R(N/N_i):=\{\fp_i\}$ for all $1\leq i\leq n$.
    If $c=\dim_R(N)$, then  we have
  \begin{equation}{\textstyle\ann_R (\h c{\fa}N)=\ann_R (N/{\bigcap_{\cdd R {\fa}{R/\fp_i}=c}N_i})}.\end{equation}
  This equality was proved by Lynch \cite[Theorem 2.4]{l} whenever $R$ is a complete local ring and $N=R$. Bahmanpour et al. \cite[Theorem 1.1]{bag} proved it when $R$ is a complete local ring and $\fa$ is the maximal ideal of $R$. Finally,   it is proved in general form  by Atazadeh et al. in \cite[Theorem 2.3]{asn}. Therefore in the case $c=\dim_R(N)$,  Lynch's conjecture is true and the $N$-height  of $\ann_R (\h c{\fa}N)$ is zero.  Also recently, in \cite[Theorem 1.2]{an2022}, Atazadeh and Naghipour  show that  the $N$-height of $\ann_R (\h c{\fa}N)$ is zero in the case when $c=\dim_R(N)-1$.
  If $c$ is not necessarily equal $\dim_R(N)$,  in \cite [Theorem 3.4]{f}, we gave the following bound for the annihilator of $\h c{\fa}N$:
    \begin{equation}\textstyle{\ann_R ({N}/{\bigcap_{\fp_i\in\Delta}N_i})\subseteq\ann_R (\h c{\fa}N)
 \subseteq \ann_R ({N}/{\bigcap_{\fp_i\in\Sigma}N_i})},\end{equation}
 where  $\Delta:=\{\fp\in\ass_R(N): \cdd R {\fa}{R/\fp}=c\}$ and $\Sigma:=\{\fp\in\ass_R(N): \cdd R {\fa}{R/\fp}=\dim_R (R/\fp)=c\}$.
If $c=\dim_R(N)$, then $\Delta=\Sigma$ and this bound yields the  equation (1.1). This paper is divided into 5 section.

   In Sec. 2, for a submodule $L$ of $N$ and $\fp\in\supp_R(N)$,  we present some  properties of $C^N_\fp(L)$ which are used  in the sequal.

  In Sec. 3, for an arbitrary non-negative integer $t$, we provide a lower bound for the annihilator of $\h t{\fa}N$. More precisely, we show that
   $C^t(\fa, N):=\bigcap_{\fp_i\in\Delta(t)}N_i=\bigcap_{\fp\in\Delta(t)}C^N_\fp(0)=\gam {\fa(t)}N$ is the largest submodule $L$ of $N$ such that
   $\cdd R{\fa}L<t$, where $\Delta(t):=\{\fp\in\ass_R(N): \cdd R{\fa}{R/\fp}\geq t\}$ and $\fa(t):=\bigcap_{\fp\in\ass_R(N)\setminus\Delta(t)}\fp$.
   There is   a following lower bound for the annihilator of $\h t{\fa}N$
   \begin{equation}
   \ann_R(N/C^t(\fa, N))\subseteq \ann_R(\h t{\fa}N);
   \end{equation}
    see Theorem \ref{lower}.
   Now set $d:=\grad{R}{\fa}N$. We denote $C^{d}(\fa, N)$ and $C^{c}(\fa, N)$ by ${\rm S}(\fa, N)$ and $\T {\fa}N$ respectively.
   For each $t\leq d$, we have $C^t(\fa, N)={\rm S}(\fa, N)$ and for each $t\geq c+1$ we have $C^t(\fa, N)=N$ and there is the filtration
  \begin{equation}{\rm S}(\fa, N)=C^{d}(\fa, N)\subseteq\dots\subseteq C^{c}(\fa, N)=\T {\fa}N\subset N\end{equation}
  of submodules of $N$ such that, for each $d\leq t\leq c$,
  $\cdd R{\fa}{C^{t+1}(\fa, N)/C^t(\fa, N)}=t$ whenever $C^{t}(\fa, N)\neq C^{t+1}(\fa, N)$; see Proposition \ref{prop5}. The submodule $\T {\fa}N$ (respectively ${\rm S}(\fa, N)$) of $N$ is used in Sec. 4 (respectively Sec. 5)  to study the annihilator of the last (respectively first) non-zero local cohomology module of $N$ with respect to $\fa$. For a submodule $L$ of $N$, we have $L=\fa L$ if and only if $L\subseteq {\rm S}(\fa, N)$. We can regard this as a version of Nakayama's Lemma because ${\rm S}(\fa, N)=0$ if and only if $1-a$ is a non-zerodivisor on $N$ for all $a\in \fa$; see Proposition \ref{prop5}.

  In Sec. 4,  we consider the annihilator of the top local cohomology $\h c{\fa}N$. In Theorem \ref{annh2}, the upper bound  for the annihilator of $\h c{\fa}N$ in (1.2) is improved  as  follows.
\begin{equation} \textstyle{\ann_R(N/\T{\fa}N)\subseteq\ann_R (\h c{\fa}N)\subseteq \ann_R(N/\bigcap_{\fp\in\Sigma}C^N_\fp(0))},\end{equation}
where $\Sigma:=\{\fp\in\supp_R(N): \cdd R {\fa}{R/\fp}=\dim_R(R/\fp)=c\}$. Also, it is  proved that if for each $\fp\in\ass_R(N)$ with $\cdd R{\fa}{R/\fp}=c$ there exists $\fq\in\Sigma$ such that $\fp\subseteq\fq$, then  the above upper and lower bounds  are equal. By using this theorem, we construct a counterexample to Lynch's conjecture (see Example \ref{exa}) which extends   Bahmanpour's example  \cite{b2017} and   Singh--Walther's example \cite{sw}. We also conclude from this theorem  (see Corollary \ref{cor1}) that  if  $\Sigma\neq\emptyset$ or $c=\dim_R(N)-1$, then
 \begin{equation}
 \Ht N{\ann_R (\h c{\fa}N)}=0. \end{equation}

 Next,    it is shown in Theorem \ref{annh4} that if $(0:_{\h c{\fa}{N/N_i}}\fa)$ is finitely generated for all $\fp_i\in\ass_R(N)$ with $\cdd R{\fa}{R/\fp_i}=c$, then
\begin{equation}\ann_R (\h c{\fa}N)=\ann_R (N/\T{\fa}N).\end{equation}
In particular, if $N$ is coprimary and $(0:_{\h c{\fa}N}\fa)$ is finitely generated, then we have $\ann_R(\h c{\fa}N)=\ann_R (N)$. Therefore if $R$ is domain and $(0:_{\h c{\fa}R}\fa)$ is finitely generated, then $\h c{\fa}R$ is faithful.
As an  application of Theorem \ref{annh4}, we will prove in Corollary \ref{cor2}  that the equality (1.7) holds in  the following cases:
(i) $c\leq 1$;
(ii) $\dim_R(N/\fa N)\leq 1$;
(iii) $\dim_R(N)\leq 2$;
 (iv) $\h c{\fa}N$ is $\fa$-cofinite minimax.
Also, if $(R, \fn)$ is a complete local ring, $\h c{\fa}N$ is  Artinian and $(0:_{\h c{\fa}N}\fa)$ is finitely generated, then it is proved in Lemma 4.9 that
\begin{align}
&\att_R(\h c{\fa}N)=\{\fp\in\ass_R(N): \cdd R{\fa}{R/\fp}=c\}\\
&\nonumber=\{\fp\in\mass_R(N): \surd(\fa+\fp)=\fn,\, \dim_R(R/\fp)=c\}.
\end{align}

Now, let $(R, \fn)$ be a local ring. Then $\dim_R(N)-\dim_R(N/\fa N)$ is a lower bound for the cohomological dimension $c:=\cdd R{\fa}N$ of $N$ with respect to $\fa$. If $\dim_R(N)-\dim_R(N/\fa N)=c$, then  we show in Theorem \ref{cd2} that
\begin{align}
&\dim_R(N)=\dim_R(R/\ann_R(\h c{\fa}N)),\\
&\nonumber\textstyle{\ann_R(\h c{\fa}N)\subseteq\ann_R(N/\bigcap_{\fp\in\assh_R(N)}C^N_\fp(0))}\end{align}
and  equality holds if $N$ is unmixed, that is, $\dim_R(R/\fp)=\dim_R(N)$ for all $\fp\in\ass_R(N)$. Note that the above theorem gives an affirmative answer to Lynch's conjecture. Then we deduce in Corollary \ref{sys} that if
$0\leq t\leq \dim_R(N)$ and $x_1,\dots,x_t\in\fn$ is a part of a system of parameters for $N$, then
 \begin{align}
 &\cdd R{(x_1,\dots,x_t)}N=t,\\
\nonumber &\dim_R(R/\ann_R(\h t{(x_1,\dots,x_t)}N))=\dim_R(N),\\
\nonumber&\textstyle{\ann_R(\h t{(x_1,\dots,x_t)}N)\subseteq\ann_R(N/\bigcap_{\fp\in\assh_R(N)}C^N_\fp(0))}\end{align}
and equality holds if $N$ is unmixed. Hence in this case also Lynch's conjecture holds.

In Sec. 5, we consider the annihilator of the first non-zero local cohomology $\h d{\fa}N$, where $d:=\grad{R}{\fa}N$.  More precisely, we show in Theorem \ref{annh7} that
\begin{equation}\textstyle{\ann_R(N/{\rm S}(\fa, N))\subseteq\ann_R(\h d{\fa}N)\subseteq\ann_R(N/\bigcap_{\fp\in\Sigma} C^N_\fp(0))\cap(\bigcap_{\fp\in\Sigma'}\fp)},\end{equation}
where   $\Sigma:=\{\fp\in\mass_R(N): \Ht {R/\fp}{(\fa+\fp)/\fp}=d\}$ and $\Sigma':=\{\fp\in\ass_R(N)\setminus\mass_R(N): \Ht {R/\fp}{(\fa+\fp)/\fp}=d\}$. In Lemma \ref{loc}, for an arbitrary non-negative integer $t$, when $(R,\fn)$ is local we improve the upper bound that presented in \cite[Theorem 3.2]{f}  for the annihilator $\h t{\fn}N$ as follows
\begin{equation}
\textstyle{\ann_R(\h t{\fn}N)\subseteq\ann_R(N/\bigcap_{\fp\in\Sigma(t)}C_\fp^N(0))\cap(\bigcap_{\fp\in\Sigma'(t)}\fp),}
\end{equation}
where $\Sigma(t):=\{\fp\in\mass_R(N): \dim_R(R/\fp)=t\}$ and $\Sigma'(t):=\{\fp\in\ass_R(N)\setminus\mass_R(N): \dim_R(R/\fp)=t\}$. The  last inclusion in (1.11) is equality whenever $N$ is  Cohen-Macaulay; see Corollary \ref{cor3}. Note that for $\fp\in\supp_R(N)$, $\ann_R(N/C^N_\fp(0))=C^R_\fp(\ann_R(N))\subseteq\fp$. Example \ref{exa2} shows that to improve the  upper bound for the annihilator of $\h d{\fa}N$ in (1.11), we can not replace $\mass_R(N)$ by $\ass_R(N)$ in the index set $\Sigma$. This example also shows that, in general,  there is not a subset $\Sigma$ of $\supp_R(N)$ such that $\ann_R(\h d{\fa}N)=\ann_R(N/\bigcap_{\fp\in\Sigma}C^N_\fp(0))$ even if $R$ is a complete regular local ring and $\fa$ is its maximal ideal. Also, in Lemma \ref{annh6}, when $N$ is coprimary, we show that
\begin{equation}
\ann_R(\h {\Ht N{\fa}}{\fa}N)=\ann_R(N).
\end{equation}
In particular, if $R$ is domain, then $\h {\Ht R{\fa}}{\fa}R$ is faithful.

In Proposition \ref{prop3}, it is proved that if $(R, \fn)$ is a homomorphic image of a Cohen-Macaulay local ring and $t\in\N_0$ is such that
$\h t{\fn}N\neq 0$, then
\begin{equation}\dim_R(R/\ann_R(\h t{\fn}N))\leq t\end{equation}
and equality holds if $\dim_R(R/\fp)=t$ for some $\fp\in\ass_R(N)$.
Example \ref{exa3} shows that  there is a local ring $(R, \fn)$ which is a homomorphic image of a complete regular local ring  such that
 \begin{equation}
 \dim_R(R/\ann_R(\h {\depth_R(R)}{\fn}{R}))<\depth_R(R)=\depth_R(R/\gam {\fn}R).
 \end{equation}
 Therefore  the analogue version of Lynch's conjecture is not true for $\h {\depth_R(R)}{\fn}R$ and inequality (1.14) may be strict.
\section{\bf Preliminaries}

Let $L$ be a proper submodule of an $R$-module $N$. Then $L$ is called a {\it primary submodule} of $N$ when for all $r\in R$ and $x\in N$ if $rx\in L$, then $x\in L$ or $r^nN\subseteq L$ for some $n\in\N$. If $L$ is a primary submodule of $N$, then $\fp:=\surd(\ann_R(N/L))$
is a prime ideal of $R$ and $L$ is called a $\fp$-primary submodule of $N$.  An expression of $L$ as an intersection of finitely many primary submodules of $N$ is called a {\it primary decomposition} of $L$ in $N$.
Such a  primary decomposition
 $$L=N_1\cap\dots \cap N_n\quad \textrm {with } N_i\   \textrm{ $\fp_i$-primary in } N\ (1\leq i\leq n)$$
of $L$ in $N$ is said to  be {\it minimal primary decomposition} when  $\fp_1,\dots,\fp_n$ are distinct and $\bigcap_{1\leq j\neq i\leq n}N_j\nsubseteq N_i$ for all $1\leq i\leq n$. In this case, we have $\ass_R(N/L)=\{\fp_1,\dots, \fp_n\}$ and hence $n$ and the set $\{\fp_1,\dots, \fp_n \}$ are uniquely determined by a minimal primary decomposition of $L$ in $N$. When $\ass_R(N)$ has just one element (or equivalently $0$ is a primary submodule of $N$), then $N$ is called
{\it coprimary}. See \cite{at, mat} for more details about the primary decomposition of modules.

   Let $S$ be a multiplicatively  closed subset  of $R$ and $L$ a submodule of an $R$-module $N$.  We denote  the contraction of $S^{-1}L$ under the canonical map $N\rightarrow S^{-1}N$ by $C^N_S(L)$ (in \cite{f} it is denoted by $S_N(L)$). If $\fp\in\supp_R(N)$ and $S=R\setminus \fp$, we write $C_\fp^N(L)$ instead of $C^N_S(L)$. For an ideal $\fa$ and a prime ideal $\fp$ of $R$ we show $C_\fp^R(\fa)$ by $C_\fp(\fa)$.
   Also, a subset $\Sigma$ of $\ass_R(N)$ is called an {\it isolated subset} of $\ass_R(N)$ when it satisfies the following condition: if $\fq\in \ass_R(N)$ and $\fq\subseteq\fp$ for some $\fp\in\Sigma$, then $\fq\in\Sigma$.

\begin{lem}\label{cont2} Let $L$ be a  submodule of an $R$-module $N$. Let  $\Sigma$  be a finite subset of $\spec(R)$ and $S:=R\setminus\bigcup_{\fp\in\Sigma}\fp$. Then $C^N_S(L)=\bigcap_{\fp\in\Sigma} C^N_{\fp}(L)$.
\end{lem}
\begin{proof}
If $\Sigma=\emptyset$, then $S=R$ and so $S^{-1}(L)=S^{-1}(N)=0$, $C^N_S(L)=N$. On the other hand, the intersection of empty family of submodules of $N$ is $N$. Hence the
assertion holds in this case. Now assume that $\Sigma$ is not empty.
If $x\in C^N_S(L)$, then in $S^{-1}(N)$, we have $x/1=l/s$ for some $s\in S$ and $l\in L$. Now for each $\fp\in\Sigma$, it is easy to see that  $x/1=l/s$ in $N_{\fp}$. Therefore
$x\in C^N_{\fp}(L)$ for all $\fp\in\Sigma$ and so $C^N_S(L)\subseteq\bigcap_{\fp\in\Sigma} C^N_{\fp}(L)$.

To prove the reverse inclusion, assume that $x$ is an arbitrary element in $N$ with  $x\notin C^N_S(L)$ and it is sufficient for us to show that
 $x\notin \bigcap_{\fp\in\Sigma} C^N_{\fp}(L)$. Since $x/1\notin S^{-1}(L)$ in $S^{-1}(N)$, we have $sx/s=x/1\notin S^{-1}(L)$ for all
  $s\in S$. Therefore $sx\notin L$ for all $s\in S$. Consequently, $(L:_Rx)\cap S=\emptyset$, where $(L:_Rx)$ denotes the ideal $\{r\in R: rx\in L\}$ of $R$.
  It follows that $(L:_Rx)\subseteq\bigcup_{\fp\in\Sigma}\fp$ and so $(L:_Rx)\subseteq\fp$ for some $\fp\in\Sigma$ by the Prime Avoidance Theorem.
 Now in $N_{\fp}$, if $x/1=l/s$ for some $l\in L$ and some $s\in R\setminus \fp$, then
   there exists $t\in R\setminus \fp$ such that $tsx=tl$ and so
   $ts\in(L:_Rx)\subseteq\fp$, which is impossible. Therefore $x/1\notin L_{\fp}$ or equivalently $x\notin C^N_{\fp}(L)$, as required.
\end{proof}

\begin{prop}\label{prop1}
 Let $L$ be a proper submodule of a non-zero finitely generated $R$-module $N$ and let $L=N_1\cap\dots\cap N_n$ be a minimal primary decomposition of $L$ in $N$
 with $\ass_R(N/N_i):=\{\fp_i\}$ for all $1\leq i\leq n$. Let $\Sigma$ be an isolated subset of $\ass_R(N/L)$. Then
 $$\textstyle{\bigcap_{\fp_i\in\Sigma}N_i=C^N_S(L)=\bigcap_{\fp\in\Sigma}C^N_\fp(L)=\bigcup_{t\in\N}(L:_N\fb^t)},$$ where $S:=R\setminus \bigcup_{\fp\in\Sigma}\fp$ and $\fb:=\bigcap_{\fp\in\ass_R(N/L)\setminus\Sigma} \fp$.
\end{prop}
\begin{proof}
If $\Sigma=\emptyset$, then $S=R$ and $\fb=\surd(\ann_R(N/L))$. Thus $\bigcap_{\fp_i\in\Sigma}N_i=C^N_S(L)=\bigcap_{\fp\in\Sigma}C^N_\fp(L)=\bigcup_{t\in\N}(L:_N\fb^t)=N$. So assume that $\Sigma\neq \emptyset$. The first claimed equality is proved in \cite[Lemma 2.2]{f}  and the second equality is proved in Lemma \ref{cont2}. Now we show that $\bigcap_{\fp_i\in\Sigma}N_i=\bigcup_{t\in\N}(L:_N\fb^t)$. Assume that $x\in \bigcup_{t\in\N}(L:_N\fb^t)$. Therefore  $\fb^tx\subseteq L$ for some $t\in\N$ and so $\fb^tx\subseteq N_i$ for all $1\leq i\leq n$. Let $\fp_i\in\Sigma$. Since $N_i$ is a $\fp_i$-primary submodule of $N$, it follows from  $\fb^tx\subseteq N_i$ that either $x\in N_i$ or $\fb^t\subseteq \fp_i$. If $\fb^t\subseteq \fp_i$, then $\fp\subseteq \fp_i$ for some $\fp\in\ass_R(N/L)\setminus \Sigma$. Since $\Sigma$ is an isolated subset of $\ass_R(N/L)$, we obtain $\fp\in\Sigma$, which is impossible. Therefore $x\in N_i$ and so
 $\bigcup_{t\in\N}(L:_N\fb^t)\subseteq\bigcap_{\fp_i\in\Sigma}N_i$. To prove the reverse inclusion, assume that $x\in\bigcap_{\fp_i\in\Sigma}N_i$ and we show that $x\in\bigcup_{t\in\N}(L:_N\fb^t)$. For each $1\leq i\leq n$, $\fp_i=\surd(\ann (N/N_i))$ and hence there exists $r_i\in\N$ such that $\fp_i^{r_i}N\subseteq N_i$. Set $r=\max \{r_i: \fp_i\in\ass_R(N/L)\setminus\Sigma\}$. Then $\fb^rN\subseteq\bigcap_{\fp_i\in\ass_R(N)\setminus\Sigma}N_i$ and so  $\fb^rx\subseteq\bigcap_{\fp_i\in\ass_R(N/L)\setminus\Sigma}N_i$. Also  $\fb^rx\subseteq\bigcap_{\fp_i\in\Sigma}N_i$ because $x\in\bigcap_{\fp_i\in\Sigma}N_i$. Therefore
 $$\textstyle{\fb^rx\subseteq (\bigcap_{\fp_i\in\ass_R(N/L)\setminus\Sigma}N_i)\cap(\bigcap_{\fp_i\in\Sigma}N_i)=\bigcap_{\fp_i\in\ass_R(N/L)}N_i=L}$$
and consequently $x\in\bigcup_{t\in\N}(L:_N\fb^t)$. This completes  the proof.
\end{proof}

 Let $N$ be a non-zero finitely generated $R$-module, $\fp\in\supp_R(N)$ and $n\in\N$.
  Then we have
$$\mass_R(N/\fp^nN)=\msupp_R(N/\fp^nN)={\rm Min}({\rm V}(\fp)\cap\supp_R(N))=\{\fp\}.$$
Thus $\{\fp\}$ is an isolated subset of $\ass_R(N/\fp^nN)$. Therefore, by Proposition \ref{prop1}, the $\fp$-primary component of each minimal primary decomposition of $\fp^nN$ in $N$ is  $C^N_\fp(\fp^nN)$ and hence it is uniquely determined by $n$, $\fp$ and $N$.
 The  $\fp$-primary component of $\fp^nN$ in $N$  is called the {\it n-th symbolic power} of $\fp$ with respect to $N$, denoted by $(\fp N)^{(n)}$.

\begin{lem} \label{sym1}
Let $N$ be a non-zero finitely generated $R$-module and $\fp\in\supp_R(N)$.  Then
 $$\textstyle{C^N_\fp(0)=\bigcap_{n\in\N}(\fp N)^{(n)}=\bigcap_{L\in\mathcal P}L,}$$
 where  $\mathcal P$ denotes the set of all $\fp$-primary submodules of $N$.
\end{lem}
\begin{proof}
We prove the claimed equalities in some steps.

1) $\ker(N\rightarrow N_\fp)\subseteq \bigcap_{L\in\mathcal P}L$. Assume that $x\in\ker(N\rightarrow N_\fp)$ and $L$ is an arbitrary $\fp$-primary submodule of $N$. Since $x/1$ is zero in $N_\fp$, there exists $s\in R\setminus \fp$ such that $sx=0\in L$. As $s\notin \fp$ and $L$ is a $\fp$-primary submodule of $N$, we have $x\in L$.

2) $\bigcap_{L\in\mathcal P}L\subseteq \bigcap_{n\in\N}(\fp N)^{(n)}$. This inclusion is obvious because  $(\fp N)^{(n)}$ is a $\fp$-primary submodule of $N$ for all $n$. We recall that $(\fp N)^{(n)}=C^N_\fp(\fp^nN)$ is the unique $\fp$-primary component of every minimal primary decomposition of $\fp^nN$ in $N$.

3) $\bigcap_{n\in\N}(\fp N)^{(n)}\subseteq \ker(N\rightarrow N_\fp)$. Assume that $x\in\bigcap_{n\in\N}(\fp N)^{(n)}$. Then $x/1\in\bigcap_{n\in\N}(\fp^nR_\fp)N_\fp$.  Now  Krull's Intersection Theorem \cite[Theorem 8.10]{mat} implies that  $\bigcap_{n\in\N}(\fp^nR_\fp)N_\fp=0$ and so $x/1=0$. Therefore $x\in\ker(N\rightarrow N_\fp)$.
\end{proof}
\begin{lem}\label{cont} Let $L$ be a  submodule of a non-zero finitely generated $R$-module $N$. Then the following statements hold.
\begin{enumerate}[\rm(i)]
\item If  $\fp_1,\fp_2\in\supp_R(N)$  are such that $\fp_1\subseteq\fp_2$, then $C^{N}_{\fp_2}(L)\subseteq C^{N}_{\fp_1}(L)$.
\item If  $\fp\in\supp_R(N)$, then $\ann_R (N/C^N_\fp(L))=C_\fp(\ann_R (N/L))$. In particular, $\ann_R (N/C^N_\fp(0))=C_\fp(\ann_R (N))$.
\item If  $\fp\in\supp_R(N)$, then $C_\fp(\ann_R (N))\subseteq\fq$ for all $\fq\in\supp_R(N)$ with $\fq\subseteq\fp$. In particular, $\Ht N{C_\fp(\ann_R (N))}=0$.
\end{enumerate}
\end{lem}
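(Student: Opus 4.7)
All three parts are direct consequences of the definition of the contraction $C^N_\fp(L)$, so my plan is to prove each one by unwinding definitions, using the description $C^N_\fp(L)=\{x\in N: sx\in L \text{ for some } s\in R\setminus\fp\}$.

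For part (i), since $\fp_1\subseteq\fp_2$ gives $R\setminus\fp_2\subseteq R\setminus\fp_1$, the canonical map $N\to N_{\fp_1}$ factors as $N\to N_{\fp_2}\to N_{\fp_1}$, and the second step sends $L_{\fp_2}$ into $L_{\fp_1}$. Pulling back, $x\in C^N_{\fp_2}(L)$ implies $x/1\in L_{\fp_2}$, hence $x/1\in L_{\fp_1}$, i.e., $x\in C^N_{\fp_1}(L)$. Elementwise: if $sx\in L$ with $s\notin\fp_2$, then $s\notin\fp_1$, so $x\in C^N_{\fp_1}(L)$.

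For part (ii), the key input is that localization commutes with the annihilator for finitely generated modules: $(\ann_R(N/L))_\fp=\ann_{R_\fp}(N_\fp/L_\fp)$. I would show that both $\ann(N/C^N_\fp(L))$ and $C_\fp(\ann(N/L))$ coincide with the set $\{r\in R:(r/1)N_\fp\subseteq L_\fp\}$: an element $r$ annihilates $N/C^N_\fp(L)$ iff $rx\in C^N_\fp(L)$ for every $x\in N$ iff $(rx)/1\in L_\fp$ for every $x$ iff $(r/1)N_\fp\subseteq L_\fp$; dually, $r\in C_\fp(\ann(N/L))$ iff $r/1\in\ann_{R_\fp}(N_\fp/L_\fp)$, the same condition. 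Alternatively, using the finitely many generators $x_1,\ldots,x_n$ of $N$: if for each $i$ there is $t_i\notin\fp$ with $t_irx_i\in L$, then $s:=t_1\cdots t_n\notin\fp$ and $srN\subseteq L$.

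For part (iii), suppose $r\in C_\fp(\ann N)$, so $sr\in\ann N$ for some $s\notin\fp$; since $\fq\subseteq\fp$, we have $s\notin\fq$, and $\ann N\subseteq\fq$ (because $\fq\in\supp_R(N)$), so $sr\in\fq$ forces $r\in\fq$ by primality. For the $N$-height statement, the condition $\fp\in\supp_R(N)$ gives $\ann N\subseteq\fp$, so there is a minimal prime $\fq$ of $\ann N$ with $\fq\subseteq\fp$; such $\fq$ is a minimal element of $\supp_R(N)$ and contains $C_\fp(\ann N)$ by the inclusion just proved, hence witnesses $\Ht N{C_\fp(\ann N)}=0$.

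I do not expect a serious obstacle; the only subtlety is in part (ii), where finite generation of $N$ is essential, allowing one to combine finitely many witnesses $t_1,\ldots,t_n$ into a single $s$ (equivalently, to commute localization past the annihilator).
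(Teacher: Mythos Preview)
Your proposal is correct and follows essentially the same approach as the paper. The paper's proof of (ii) is exactly your chain of equivalences through the condition $(r/1)N_\fp\subseteq L_\fp$; for (iii) the paper first invokes (i) to pass from $C_\fp(\ann N)$ to $C_\fq(\ann N)$ and then observes $(r/1)N_\fq=0$ forces $r\in\fq$, which is just a repackaging of your direct elementwise argument (and the paper leaves the $N$-height consequence implicit, whereas you spell it out correctly).
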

\begin{proof}
The statement  (i) is  obvious. To prove (ii), assume that  $\fp\in\supp_R(N)$ and $r\in R$. Then  we have
\begin{align*}
&\textstyle{r\in\ann_R (N/C^N_\fp(L))\Leftrightarrow rN\subseteq C^N_\fp(L)\Leftrightarrow {r\over 1} N_\fp\subseteq L_\fp}\\
&\textstyle{\Leftrightarrow {r\over 1}\in\ann_{R_\fp}(N_\fp/L_\fp)\Leftrightarrow r\in C_\fp(\ann_R (N/L)).}
\end{align*}
(Note that $(\ann_R(N/L))_\fp=\ann_{R_\fp}(N_\fp/L_\fp)$ by \cite[Proposition 3.14]{at}.) Now we prove (iii). Let $\fp, \fq\in\supp_R(N)$ with $\fq\subseteq\fp$. Then, by (i), $C_\fp(\ann_R (N))\subseteq C_\fq(\ann_R (N))$. Also if $r\in C_\fq(\ann_R (N))$, then $(r/1)N_\fq=0$. Therefore $r/1$ is not a unit in $R_\fq$  and so $r\in\fq$. Thus $C_\fq(\ann_R (N))\subseteq\fq$. These  inclusions prove (iii).
\end{proof}

\begin{lem}\label{lem5}
Let $L$ be a submodule of a non-zero $R$-module $N$ and $\fp, \fq\in\supp_R(N)$ with $\fp\subseteq\fq$. Then $(C^N_\fp(L))_\fq=C^{N_\fq}_{\fp R_\fq}(L_\fq)$.
\end{lem}
\begin{proof}
Let $\alpha\in(C^N_\fp(L))_\fq$. Hence, in $N_\fq$, $\alpha=x/s$ for some $s\in R\setminus \fq$ and some $x\in C^N_\fp(L)$. Therefore, in $N_\fp$, $x/1=l/t$ for some $l\in L$ and some $t\in R\setminus \fp$. It follows that $xtt'=lt'$ for some $t'\in R\setminus \fp$. Since $tt'/1\in R_\fq\setminus \fp R_\fq$,  in  $(N_\fq)_{\fp R_\fq}$ we have
${x\over s}/{1\over 1}={tt'\over 1}{x\over s}/{tt'\over 1}={lt'\over s}/{tt'\over 1}\in (L_\fq)_{\fp R_\fq}.$
This means that  $\alpha=x/s$ is an element of $C^{N_\fq}_{\fp R_\fq}(L_\fq)$ and so $(C^N_\fp(L))_\fq\subseteq C^{N_\fq}_{\fp R_\fq}(L_\fq)$.

Now we prove the reverse inclusion. Assume that $\alpha\in C^{N_\fq}_{\fp R_\fq}(L_\fq)$. Hence $\alpha=x/s$ for some $x\in N$ and some $s\in R\setminus\fq$, and in $(N_\fq)_{\fp R_\fq}$ we have ${x\over s}/{1\over 1}\in (L_\fq)_{\fp R_\fq}$. Therefore there exists $l/s'\in L_\fq$ with $l\in L, s'\in R\setminus \fq$ and $t/s''\in R_\fq\setminus\fp R_\fq$ with $t\in R\setminus \fp, s''\in R\setminus \fq$ such that ${x\over s}/{1\over 1}={l\over s'}/{t\over s''}$ in $(N_\fq)_{\fp R_\fq}$. It follows that there exists $t'/s'''\in R_\fq\setminus \fp R_\fq$ with $t'\in R\setminus \fp, s'''\in R\setminus \fq$ such that
${x \over s}{t\over s''}{t'\over s'''}={l\over s'}{t'\over s'''}$ in $N_\fq$.
Thus $xtt's's'''s^{iv}=lt'ss''s'''s^{iv}$ for some $s^{iv}\in R\setminus \fq$. Since $R\setminus\fq\subseteq R\setminus \fp$,  $xt''=l'$ for some $t''\in R\setminus \fp$ and some $l'\in L$. Hence, in $N_\fp$ we have
$x/1=xt''/t''=l'/t''\in L_\fp$
and so $x\in C^N_\fp(L)$. Therefore  $\alpha=x/s\in (C^N_\fp(L))_\fq$ and hence $C^{N_\fq}_{\fp R_\fq}(L_\fq)\subseteq(C^N_\fp(L))_\fq$. This completes the proof.
\end{proof}
\section{\bf A lower bound for the annihilator of local cohomology}
Let $N$ be a finitely generated $R$-module,  $\fa$ an ideal of $R$  and $t$  an arbitrary non-negative integer.
In this section, we provide a lower bound for the annihilator of the local cohomology $\h t{\fa}N$; see Theorem \ref{lower}.

We recall that, the {\it cohomological dimension} of an $R$-module $N$  with respect to an ideal $\fa$  is defined as $\cdd R{\fa}N:=\sup\{i\in\N_0: \h i{\fa}N\neq 0\}$. The {\it arithmetic rank} of $\fa$, denoted by $\textrm{ara}(\fa)$, is the least number of elements of $R$ required to generate an ideal which has the same radical as $\fa$. By \cite[Corollary 3.3.3]{bs}, $\cdd R{\fa}N\leq \textrm{ara}(\fa)<\infty$ and hence $\cdd R{\fa}N\in \N_0\cup\{-\infty\}$. Also, it follows from \cite[Exercise 6.2.6 and Theorem 6.2.7]{bs} that $\h i{\fa}N=0$  for all $i\in\N_0$ if and only if $N=\fa N$. Therefore $\cdd R {\fa}N\in\N_0$ when $N\neq\fa N$ and $\cdd R{\fa}N=-\infty$ when $N=\fa N$.

\begin{lem}[See {\cite[Theorem 1.2]{dnt} or \cite[Proposition 4.7]{cjr}}]\label{cd}
  Let $N$ and $L$ be  two finitely generated $R$-modules and $\fa$ an ideal of $R$. If $\supp_R(N)\subseteq \supp_R(L)$, then $\cdd R {\fa}N\leq\cdd R {\fa}L$.
  In particular,
$\cdd R {\fa}N=\cdd R {\fa}L$ whenever $\supp_R(N)=\supp_R(L)$.
\end{lem}
  Let $N$ be a finitely generated $R$-module and $\fa$ an ideal of $R$. Since $\supp_R(N)=\supp_R(\bigoplus_{\fp\in\ass_R(N)}R/\fp)$, Lemma \ref{cd} implies that  $\cdd R {\fa}N=\max\{\cdd R {\fa}{R/\fp}: \fp\in\ass_R(N)\}$. We will often use   this  fact and Lemma \ref{cd}   without explicit  mention. We refer the reader to
   \cite[Sec. 4]{cjr} and \cite{dnt} for more details about the cohomological dimension.

\begin{thm}\label{lower} Let $N$ be a non-zero finitely generated $R$-module and $\fa$ an ideal of $R$. Let
  $0=N_1\cap\hdots\cap N_n$ be a minimal primary decomposition of the zero submodule of $N$ with $\ass_R(N/N_i):=\{\fp_i\}$ for all $1\leq i\leq n$. For each $t\in\mathbb Z$,  set $\Delta(t):=\{\fp\in\ass_R(N): \cdd R{\fa}{R/\fp}\geq t\}$.
Then the following statements hold.
\begin{enumerate}[\rm(i)]
\item There is the following equalities:
 $$\textstyle{\bigcap_{\fp_i\in\Delta(t)}N_i=\bigcap_{\fp\in\Delta(t)}C_\fp^N(0)=C^N_{S(t)}(0)=\gam {\fa (t)}N,}$$  where  $S(t):=R\setminus\bigcup_{\fp_i\in\Delta(t)}\fp$ and $\fa (t):=\bigcap_{\fp\in\ass_R(N),\ \cdd R{\fa}{R/\fp}<t}\fp$. In particular, $\bigcap_{\fp_i\in\Delta(t)}N_i$ is independent of the choice of minimal primary decomposition of the zero submodule of $N$.
  \item  For each submodule $L$ of $N$,  $\cdd R{\fa}L<t$ if and only if $L\subseteq C^N_{S(t)}(0)$.
  \item There is the following lower bound for the annihilator of $\h t{\fa}N$:
   $$\textstyle{\ann_R({N}/\bigcap_{\fp\in\Delta(t)}C_\fp^N(0))=\bigcap_{\fp\in\Delta(t)}C_\fp(\ann_R(N))\subseteq\ann_R(\h t{\fa}N).}$$
 \end{enumerate}
\end{thm}
\begin{proof}
 (i) If $\fq\in\ass_R (N)$ and $\fq\subseteq \fp$ for some $\fp\in\Delta(t)$, then  $t\leq\cdd R{\fa}{R/\fp}\leq\cdd R{\fa}{R/\fq}$ and so $\fq\in\Delta(t)$. Therefore
$\Delta(t)$ is an isolated subset of $\ass_R (N)$ and hence  (i) follows from  Proposition  \ref{prop1}.

(ii) Set $C=C^N_{S(t)}(0)$. Then, by (i), we have
\begin{align*}
&\ass_R (C)=\textstyle{\ass_R(\bigcap_{\fp_i\in\Delta(t)}N_i)=\ass_R(N)\setminus\Delta(t)}\\
&=\{\fp\in\ass_R (N): \cdd R{\fa}{R/\fp}<t\}.
\end{align*}
 Hence,   $\cdd R{\fa}{C}<t$ and so, for each submodule $L$ of $C$, $\cdd R{\fa}L\leq\cdd R{\fa}C<t$. Conversely,
   if $L$ is a submodule of $N$ such that $\cdd R{\fa}L<t$, then
$$\ass_R ({L}/({L\cap C}))=\ass_R (({L+C})/{C})\subseteq\ass_R ({N}/{C})=\Delta(t).$$
Thus if $\ass_R ({L}/({L\cap C}))\neq\emptyset$, then $t\leq\cdd R{\fa}{{L}/({L\cap C})}\leq\cdd R{\fa}L$, which is impossible. Therefore
$L\subseteq C$ and the proof of (ii) is completed.

(iii) By (ii),  $\cdd R{\fa}{C}<t$. Therefore $\h t{\fa}N\cong\h t{\fa}{N/C}$ and hence
 $$\ann_R(N/C)\subseteq\ann_R(\h t{\fa}{N/C})=\ann_R(\h t{\fa}N).$$
  \end{proof}
\begin{defn} \label{def1} Let $N$ be a non-zero finitely generated $R$-module, $\fa$ an ideal of $R$ and $t\in\N_0$. Let $0=N_1\cap\dots\cap N_n$ be a minimal primary decomposition of the zero submodule of $N$ with $\ass_R(N/N_i):=\{\fp_i\}$ for all $1\leq i\leq n$. We Set $\Delta(t):=\{\fp\in\ass_R(N): \cdd R{\fa}{R/\fp}\geq t\}$, $S(t):=R\setminus\bigcup_{\fp\in\Delta(t)}\fp$ and $\fa(t):=\bigcap_{\fp\in\ass_R(N)\setminus\Delta(t)}\fp$. Then, by Theorem \ref{lower},
 $$\textstyle{\bigcap_{\fp_i\in\Delta(t)}N_i=\bigcap_{\fp\in\Delta(t)}C_\fp^N(0)=C^N_{S(t)}(0)=\gam {\fa (t)}N}$$
 is the largest submodule $L$ of $N$ with the property that $\cdd R{\fa}L<t$.
 We denote this submodule of $N$ by $C^t(\fa, N)$. If $N=\fa N$, then $\Delta(t)=\emptyset$, $S(t)=R$  and $\fa(t)=\surd(\ann_R(N))$. Thus $C^t(\fa, N)=N$ for all $t$. If $N\neq\fa N$, then $d:=\grad{R} {\fa}N$ and $c:=\cdd R{\fa}N$   are finite non-negative integers and we denote $C^d(\fa, N)$ and $C^c(\fa, N)$ by ${\rm S}(\fa, N)$ and ${\rm T}(\fa, N)$ respectively.
 \end{defn}
In Proposition \ref{prop5}, we give some properties of $C^t(\fa, N)$. The following lemma is needed.
 \begin{lem}\label{grad}
 Let $N$ be a finitely generated $R$-module and $\fa$ an ideal of $R$ with $N\neq \fa N$. Then for each $\fp\in \ass_R(N)$ with $\fa+\fp\neq R$, there is the following inequalities:
 $$\grad{R} {\fa}N\leq\Ht {R/\fp}{\fa+\fp}\leq \cdd R{\fa}{R/\fp}.$$
 In particular, for each $\fp\in \ass_R(N)$, $\cdd R{\fa}{R/\fp}<\grad{R} {\fa}N$ if and only if $\cdd R{\fa}{R/\fp}=-\infty$ or equivalently $\fa+\fp=R$.
 \end{lem}
 \begin{proof}
 Assume that $\fp\in\ass_R(N)$ with $\fa+\fp\neq R$ and $\fq$ is a prime ideal of $R$ containing $\fa+\fp$ such that $\Ht {R/\fp}{\fa+\fp}=\Ht {R/\fp}{\fq/\fp}$. Then we have
 $$\grad{R} {\fa}N\leq\grad{R} {\fq}N\leq\depth_{R_\fq}({N_\fq}).$$
  Also,  depth formula \cite[Lemma 9.3.2]{bs} yields
  $$\depth_{R_\fq}({N_\fq})\leq\depth_{R_\fp}(N_\fp)+\Ht {R/\fp}{\fq/\fp}=\Ht {R/\fp}{\fq/\fp}=\Ht {R/\fp}{\fa+\fp}.$$
  These inequalities prove the first claimed inequality. To prove the  other inequality, we set $\bar{R}:=R/\fp$.
  Since $\fa\bar R$ is a proper ideal of $\bar R$, it follows from \cite[Exercise 7.3.4]{bs} and the Independence Theorem that
  $$\Ht{\bar{R}}{\fa \bar{R}}\leq\cdd {\bar R}{\fa \bar R}{\bar R}=\cdd R{\fa}{R/\fp}.$$
 This proves the second claimed inequality. The last assertion follows immediately from the first part.
 \end{proof}

 \begin{defn} Let $N$ be a finitely generated $R$-module and $\fa$  an ideal of $R$.  We say that $N$ is  relative Cohen-Macaulay with respect to $\fa$ if $\grad{R} {\fa}N=\cdd R{\fa}N$ or equivalently there is precisely one non-vanishing local cohomology
module of $N$ with respect to $\fa$; see \cite{ra}.
\end{defn}
 Note that if $N=\fa N$, then $\h i{\fa}N=0$ for all $i\in\N_0$ and hence $\grad{R} {\fa}N=\inf\{i\in\N_0: \h i{\fa}N\neq 0\}=\infty$ and $\cdd R{\fa}N=\sup\{i\in\N_0: \h i{\fa}N\neq 0\}=-\infty$. Therefore $\grad{R} {\fa}N\neq\cdd R{\fa}N$ and so $N$ is not relative Cohen-Macaulay with respect to $\fa$ in this case.
\begin{cor}\label{rel} Let $\fa$ be an ideal of $R$ and $N$  a finitely generated $R$-module. If $N$ is relative Cohen-Macaulay with respect to $\fa$, then, for each $\fp\in\ass_R(N)$ with $\fa+\fp\neq R$, there is the following equalities:
$$\grad{R} {\fa}N=\Ht {R/\fp}{\fa+\fp}=\cdd R{\fa}{R/\fp}=\cdd R{\fa}N.$$
\end{cor}
 \begin{proof}
 It is an immediate consequence of Lemmas \ref{grad} and  \ref{cd}.
 \end{proof}

\begin{prop}\label{prop5}
Let $N$ be a finitely generated $R$-module and $\fa$ an ideal of $R$ such that $N\neq\fa N$. Set $d:=\grad{R}{\fa}N$, $c:=\cdd R{\fa}N$ and, for each $t\in\mathbb Z$,  $\Delta(t):=\{\fp\in\ass_R(N): \cdd R{\fa}{R/\fp}\geq t\}$.
The following statements hold.
\begin{enumerate}[\rm(i)]
\item There is the following equalities:
\begin{align*}
&\ass_R(N/C^t(\fa, N))=\{\fp\in\ass_R(N): \cdd R{\fa}{R/\fp}\geq t\},\\
 &\ass_R(C^t(\fa, N))=\{\fp\in\ass_R(N): \cdd R{\fa}{R/\fp}<t\},\\
 &\ass_R(C^{t+1}(\fa, N)/C^t(\fa, N))=\{\fp\in\ass_R(N): \cdd R{\fa}{R/\fp}=t\}.
\end{align*}
In particular, $C^t({\fa}, N)=N$ if and only if $\Delta(t)=\emptyset$; and $C^t({\fa}, N)=0$ if and only if $\Delta(t)=\ass_R(N)$.
\item $C^t(\fa, N)=N$ for all $t>c$;  $C^t(\fa, N)={\rm S}(\fa, N)$ for all $t\leq d$; and
     $\{C^i(\fa, N)\}_{i\in\mathbb Z}$ gives  the following bounded ascending chain
     $${\rm S}(\fa, N)=C^d(\fa, N)\subseteq\dots\subseteq C^c(\fa, N)=\T{\fa}N\subsetneqq C^{c+1}(\fa, N)=N$$
      of submodules of $N$ such that, for each $d\leq t\leq c$, $\cdd R{\fa}{C^{t+1}(\fa, N)/C^t(\fa, N)}=t$ whenever $C^{t}(\fa, N)\neq C^{t+1}(\fa, N)$.
      \item  $\ass_R({\rm S}(\fa, N))=\{\fp\in\ass_R(N): \fa+\fp=R\}$ and $\ass_R(N/{\rm S}(\fa, N))=\{\fp\in\ass_R(N): \fa+\fp\neq R\}.$
      \item  ${\rm S}(\fa, N)=0$ if and only if for each $a\in\fa$, $1-a$ is a non-zerodivisor on $N$.
      \item For each submodule $L$ of $N$, $L=\fa L$ (or equivalently $\h i{\fa}L=0$ for all $i\in \N_0$) if and only if $L\subseteq{\rm S}(\fa, N)$.
      In particular, the following statements are equivalent.
      \begin{enumerate}[(1)]
      \item For each $a\in\fa$, $1-a$ is a non-zerodivisor on $N$.
        \item  For each submodule $L$ of $N$, $L=\fa L$ if and only if $L=0$.
        \end{enumerate}
\end{enumerate}
\end{prop}
\begin{proof}
(i) Let $0=N_1\cap\dots\cap N_n$ be a minimal primary decomposition of the zero submodule of $N$ with $\ass_R(N/N_i)=\{\fp_i\}$ for all $1\leq i\leq n$.  By definition, $C^t(\fa, N)=\bigcap_{\fp_i\in\Delta(t)}N_i$. Now by \cite[Lemma 2.1]{f}, we have
\begin{align*}
&\ass_R(N/C^t(\fa, N))=\Delta(t)=\{\fp\in\ass_R(N): \cdd R{\fa}{R/\fp}\geq t\},\\
 &\ass_R(C^t(\fa, N))=\ass_R(N)\setminus\Delta(t)=\{\fp\in\ass_R(N): \cdd R{\fa}{R/\fp}<t\}.
 \end{align*}
To prove the last claimed equality, assume  that $\fp\in\ass_R(C^{t+1}(\fa, N)/C^t(\fa, N))$. Since $C^{t+1}(\fa, N)/C^t(\fa, N)$ is a submodule of $N/C^t(\fa, N)$,  $\cdd R{\fa}{R/\fp}\geq t$. On the other hand, $\fp\in\supp_R(C^{t+1}(\fa, N))$ and so $\cdd R{\fa}{R/\fp}\leq\cdd R{\fa}{C^{t+1}(\fa, N)}< t+1$. Therefore $$\ass_R(C^{t+1}(\fa, N)/C^t(\fa, N))\subseteq\{\fp\in\ass_R(N): \cdd R{\fa}{R/\fp}=t\}.$$
Now we prove the reverse inclusion. Assume that $\fp\in\ass_R(N)$ and $\cdd R{\fa}{R/\fp}=t$. Hence $\fp\in\ass_R(N/C^t(\fa, N))\setminus\ass_R(N/C^{t+1}(\fa, N))$. It follows from the exact sequence $0\rightarrow C^{t+1}(\fa, N)/C^t(\fa, N)\rightarrow N/C^t(\fa, N)\rightarrow N/C^{t+1}(\fa, N)\rightarrow 0$ that $\fp\in\ass_R(C^{t+1}(\fa, N)/C^t(\fa, N))$. This proves the reverse  inclusion.

 (ii) If $t>c$, then $\Delta(t)=\emptyset$ and so $C^t(\fa, N)=N$ by (i). Also for $t\leq d$ and $\fp\in\ass_R(N)$, Lemma \ref{grad} implies that $\cdd R{\fa}{R/\fp}\geq t$ when $\fa+\fp\neq R$, and  $\cdd R{\fa}{R/\fp}=-\infty$ when $\fa+\fp=R$. Thus $\Delta(t)=\{\fp\in\ass_R(N): \fa+\fp\neq R\}=\Delta(d)$ and consequently $C^t(\fa , N)={\rm S}(\fa, N)$ for all $t\leq d$. Also, it is clear by definition that $\T {\fa}N\neq N$ and $C^t(\fa, N)\subseteq C^{t+1}(\fa, N)$ for all $t$. The final assertion follows from (i).

 (iii) By  Lemma \ref{grad},  we have $\Delta(d)=\{\fp\in\ass_R(N): \fa+\fp\neq R\}$. Now (iii) follows from (i).

(iv) Since $\zdv_R(N)=\bigcup_{\fp\in\ass_R(N)}\fp$,  $1-a$ is a non-zerodivisor on $N$ for all $a\in\fa$, if and only if $\fa+\fp\neq R$ for all $\fp\in\ass_R(N)$ or equivalently $\ass_R({\rm S}(\fa, N))=\emptyset$. This proves (iv).

(v) By (ii),  $C^0(\fa, N)={\rm S}(\fa, N)$. Therefore it follows from  Theorem \ref{lower}(ii) that for each submodule $L$ of $N$, $\h i{\fa}L=0$ for all $i\in\N_0$ (or equivalently $L=\fa L$) if and only if $L\subseteq {\rm S}(\fa, N)$. The last assertion follows from (iv). This completes the proof.
\end{proof}

\section{\bf An upper  bound for the annihilator of top local cohomology and Lynch's conjecture}
    In \cite[Theorem 3.4]{f}, we provide a bound for the annihilator of top local cohomology. In the following theorem, we establish a sharper upper bound for this annihilator. Using this theorem, we can compute the annihilators of top local cohomology modules  in certain cases and we are able to construct a counterexample to Lynch's conjecture.

\begin{thm} \label{annh2} Let $N$ be a finitely generated $R$-module and $\fa$ an ideal of $R$ with $N\neq\fa N$. Set $c:=\cdd R {\fa}N$, $\Delta:=\{\fp\in\ass_R(N): \cdd R {\fa}{R/\fp}=c\}$ and $\Sigma:=\{\fp\in\supp_R(N): \cdd R {\fa}{R/\fp}=\dim_R (R/\fp)=c\}$.
Then
  \begin{align*}  {\ann_R({N}/\operatorname{T}({\fa}, N))}\subseteq\ann_R(\h c{\fa}N)
\textstyle{\subseteq \ann_R (N/\bigcap_{\fp\in\Sigma}C^N_{\fp}(0))}.
 \end{align*}
 Moreover, if for each $\fp\in\Delta$ there exists $\fq\in\Sigma$ with $\fp\subseteq\fq$, then
 $$\textstyle{\ann_R(\h c{\fa}N)=\ann_R (N/\T {\fa}N)=\ann_R(N/\bigcap_{\fp\in\Sigma}C^N_{\fp}(0)).}$$
\end{thm}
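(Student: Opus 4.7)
The plan is to establish (i) first; (ii) will then follow from a primary-decomposition calculation. For the lower bound in (i), the defining property of $\T{\fa}{N}$ as the largest submodule annihilated by $\h{c}{\fa}{-}$ yields $\h{c}{\fa}{\T{\fa}{N}}=0$. Applying local cohomology to $0\to\T{\fa}{N}\to N\to N/\T{\fa}{N}\to 0$ and using $c=\ce{\fa}{N}$ gives the isomorphism $\h{c}{\fa}{N}\cong\h{c}{\fa}{N/\T{\fa}{N}}$, and the inclusion $\ann(N/\T{\fa}{N})\subseteq \ann(\h{c}{\fa}{N})$ follows from the standard $\ann(M)\subseteq\ann(\h{c}{\fa}{M})$. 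The equality $\ann\bigl(N/\bigcap_{\fp\in\Sigma}C^N_\fp(0)\bigr)=\bigcap_{\fp\in\Sigma}C_\fp(\ann N)$ combines Lemma \ref{cont}(ii) with the standard identity $\ann(N/\bigcap L_\alpha)=\bigcap\ann(N/L_\alpha)$.

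The heart of (i) is the upper bound, which I reduce to proving, for each fixed $\fp\in\Sigma$, that $\ann(\h{c}{\fa}{N})\subseteq C_\fp(\ann N)$. The key device is to apply the $c=d$ theorem \cite[Theorem 2.3]{asn} not to $N$ itself but to each quotient $N/\fp^n N$ with $n\geq 1$. Because $\fp\in\supp_R(N)$ one has $\supp_R(N/\fp^n N)=\V(\fp)$, and therefore $\dim_R(N/\fp^n N)=\dim_R(R/\fp)=c=\ce{\fa}{R/\fp}=\ce{\fa}{N/\fp^n N}$; moreover $\fp$ is the unique minimal element of $\V(\fp)$, so it lies in $\ass_R(N/\fp^n N)$ and its (necessarily unique) $\fp$-primary component of $0$ coincides with $C^{N/\fp^n N}_\fp(0)$. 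Since $\fp$ contributes to $\T{\fa}{N/\fp^n N}$, the latter is contained in $C^{N/\fp^n N}_\fp(0)$, so \cite[Theorem 2.3]{asn} together with Lemma \ref{cont}(ii) gives
\[\ann(\h{c}{\fa}{N/\fp^n N})\subseteq \ann\bigl((N/\fp^n N)/C^{N/\fp^n N}_\fp(0)\bigr)=C_\fp(\ann(N/\fp^n N)).\]

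Lemma \ref{lem3} applied with $\fb=\fp^n$ yields $\h{c}{\fa}{N/\fp^n N}\cong\h{c}{\fa}{N}/\fp^n\h{c}{\fa}{N}$, whence $\ann(\h{c}{\fa}{N})\subseteq \ann(\h{c}{\fa}{N/\fp^n N})\subseteq\{r\in R:rN_\fp\subseteq \fp^n N_\fp\}$ for every $n\geq 1$. Intersecting over all $n$ and invoking Krull's intersection theorem on the finitely generated $R_\fp$-module $N_\fp$ over the local ring $R_\fp$ (so that $\bigcap_n \fp^n N_\fp=0$) produces $\ann(\h{c}{\fa}{N})\subseteq \{r:rN_\fp=0\}=C_\fp(\ann N)$, finishing (i). For (ii), fix a minimal primary decomposition $0=\bigcap_{i=1}^m N_i$ with $\ass_R(N/N_i)=\{\fp_i\}$ and use the standard localization formula $C^N_\fp(0)=\bigcap_{\fp_i\subseteq\fp}N_i$ to rewrite $\bigcap_{\fp\in\Sigma}C^N_\fp(0)=\bigcap_{\fp_i\in A}N_i$, where $A:=\{\fp_i\in\ass_R(N):\fp_i\subseteq\fq\text{ for some }\fq\in\Sigma\}$. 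Since $\fp_i\subseteq\fq\in\Sigma$ forces $c=\ce{\fa}{R/\fq}\leq \ce{\fa}{R/\fp_i}\leq c$, we have $A\subseteq\Delta$; the hypothesis of (ii) supplies the reverse inclusion, giving $A=\Delta$ and $\bigcap_{\fp\in\Sigma}C^N_\fp(0)=\T{\fa}{N}$. Combined with (i), every inclusion collapses to equality.

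The principal obstacle is the passage from the $n$-uniform bound $\ann(\h{c}{\fa}{N})\subseteq C_\fp(\ann(N/\fp^n N))$ to the limit bound $\ann(\h{c}{\fa}{N})\subseteq C_\fp(\ann N)$: the choice of the $\fp^n$-adic system is essential because it simultaneously forces each $N/\fp^n N$ to satisfy $c=d$ (so that \cite[Theorem 2.3]{asn} applies) and allows Krull's intersection on $N_\fp$ to collapse the family of bounds. This is exactly what extends the old bound of \cite[Theorem 3.4]{f} from $\fp\in\ass_R(N)$ to arbitrary $\fp\in\Sigma\subseteq \supp_R(N)$.
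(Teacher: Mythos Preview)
Your argument is correct. The route for the upper bound in (i), however, differs from the paper's. The paper fixes $\fp\in\Sigma$ and runs over \emph{all} $\fp$-primary submodules $L$ of $N$: since $\ass_R(N/L)=\{\fp\}$ and $\fp\in\Sigma$, the old bound \cite[Theorem~3.4(iii)]{f} applied to the primary module $N/L$ collapses to an equality $\ann(\h{c}{\fa}{N/L})=\ann(N/L)$; surjecting $\h{c}{\fa}{N}\twoheadrightarrow\h{c}{\fa}{N/L}$ and intersecting over $L$ via Lemma~\ref{sym1} gives $\ann(\h{c}{\fa}{N})\subseteq\ann(N/C^N_\fp(0))$. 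You instead pass to the specific tower $N/\fp^n N$, observe that each such quotient has $\dim=c$, invoke the $c=d$ result \cite[Theorem~2.3]{asn}, and intersect directly by Krull's theorem on $N_\fp$. The two arguments rest on the same mechanism (quotient, compute, intersect), and since Lemma~\ref{sym1} itself is proved via Krull, the difference is largely organizational: your version is a bit more self-contained (it needs neither Lemma~\ref{sym1} nor \cite[Theorem~3.4]{f}, only the older \cite{asn}), while the paper's version highlights the role of arbitrary $\fp$-primary submodules, tying in cleanly with the symbolic-power description of $C^N_\fp(0)$. Your treatment of (ii), rewriting $\bigcap_{\fp\in\Sigma}C^N_\fp(0)$ via the localization formula $C^N_\fp(0)=\bigcap_{\fp_i\subseteq\fp}N_i$ and identifying the index set with $\Delta$, is equivalent to the paper's component-by-component argument using Lemma~\ref{cont}(i) and Lemma~\ref{sym1}.
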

\begin{proof}
The first claimed inclusion follows from Theorem \ref{lower}(iii). Now we prove the second inclusion. If $\Sigma=\emptyset$, then $\bigcap_{\fp\in\Sigma}C^N_{\fp}(0)=N$ and so there is nothing to prove. Hence assume that $\Sigma\neq\emptyset$ and $\fp\in\Sigma$. Let $L$ be an arbitrary $\fp$-primary submodule of $N$.  Since $\ass_R(N/L)=\{\fp\}$, we obtain
\begin{align*}
&\{\fq\in\ass_R(N/L): \cdd R {\fa}{R/\fq}=\cdd R {\fa}{N/L}\}=\{\fp\}\\&=\{\fq\in\ass_R(N/L): \cdd R {\fa}{R/\fq}=\dim_R(R/\fq)=\cdd R {\fa}{N/L}\}.
\end{align*}
Therefore $\ann_R (\h c{\fa}{N/L})=\ann_R (N/L)$ by \cite[Theorem 3.4(iii)]{f}.
 Also, the exact sequence $0\rightarrow L\rightarrow N\rightarrow N/L\rightarrow 0$ induces the epimorphism
$\h c{\fa}N\rightarrow \h c{\fa}{N/L}$ and so
$$\ann_R (\h c{\fa}{N})\subseteq\ann_R (\h c{\fa}{N/L})=\ann_R (N/L).$$
Since $\fp$ is an arbitrary element of $\Sigma$ and $L$ is an arbitrary $\fp$-primary submodule of $N$, it follows from the above inclusion in view of Lemma \ref{sym1}  that
\begin{align*}
&\ann_R (\h c{\fa}{N})\subseteq\textstyle{\bigcap_{\fp\in\Sigma}\, \bigcap_{L\in\mathcal P}\ann_R ({N/L})}
=\textstyle{\bigcap_{\fp\in\Sigma}\ann_R ({N/\bigcap_{L\in\mathcal P}L})}\\
&=\textstyle{\bigcap_{\fp\in\Sigma}\ann_R ({N/C^N_{\fp}(0)})}=\textstyle{\ann_R (N/\bigcap_{\fp\in\Sigma}C^N_{\fp}(0)),}
\end{align*}
where $\mathcal P$ denotes the set of all $\fp$-primary submodules of $N$. This proves the second claimed inclusion.

 Finally, assume that  $0=N_1\cap\dots\cap N_n$ is a minimal primary decomposition of the zero submodule of $N$ with $\ass_R(N/N_i):=\{\fp_i\}$ for all $1\leq i\leq n$ and assume that for each $\fp_i\in\Delta$ there exists $\fq_i\in\Sigma$ such that $\fp_i\subseteq\fq_i$. Since $N_i$ is a $\fp_i$-primary submodule of $N$, by Lemma \ref{sym1}, we have $C^N_{\fp_i}(0)\subseteq N_i$ and so $\bigcap_{\fq\in\Sigma}C^N_{\fq}(0)\subseteq C^N_{\fq_i}(0)\subseteq C^N_{\fp_i}(0)\subseteq N_i$.  Since $\fp_i$ is an arbitrary element of $\Delta$, we obtain $\bigcap_{\fq\in\Sigma}C^N_{\fq}(0)\subseteq \bigcap_{\fp_i\in\Delta}N_i=\T {\fa}N,$ see Definition \ref{def1}. Hence
 $\ann_R(N/\bigcap_{\fq\in\Sigma}C^N_{\fq}(0))\subseteq\ann_R(N/\T {\fa}N)$.
 Now the first part of theorem gives the claimed equalities and the proof is completed.
\end{proof}
Let  $N$ be a finitely generated $R$-module  of dimension $n\geq 1$, $\fa$ an ideal of $R$ with  $N\neq\fa N$ and $c:=\cdd R {\fa}N$.  In \cite[Theorems 1.1 and 1.2]{an2022}, Atazadeh and Naghipour as one of their main results proved   that $\Ht N{\ann (\h c{\fa}N)}<n-c$. In particular,  $\Ht N{\ann_R (\h c{\fa}N)}=0$ when $c=n-1$. In the  following corollary,  we prove their result by using Theorem \ref{annh2} and  we also show that if there exist $\fp\in\supp_R(N)$ with $\cdd R {\fa}{R/\fp}=\dim_R(R/\fp)=c$ ($c$ is not necessarily equal $n-1$), then $\ann_R (\h c{\fa}N$  has $N$-height  zero.

\begin{lem}\label{lem3} Let $N$ be a finitely generated $R$-module, $\fa$ an ideal of $R$ with $N\neq\fa N$ and $c:=\cdd R{\fa}N$. Then
  $\h c{\fa}{N\otimes_R(\cdot)}\cong \h c{\fa}N\otimes_R(\cdot)$ on the category of $R$-modules and $R$-homomorphisms.
\end{lem}
\begin{proof}
Set  $\bar{R}:=R/\ann_R(N)$. It follows from the Independence Theorem \cite[Theore 4.2.1]{bs} and Lemma \ref{cd} that $\cdd {\bar R}{\fa\bar{R}}{\bar{R}}=\cdd R {\fa}{\bar{R}}=\cdd R {\fa}{N}=c$. Hence $\h c{\fa\bar{R}}{\cdot}$ is a right exact functor on the category of $\bar{R}$-modules. Furthermore  $\h c{\fa\bar{R}}{\cdot}$ is an additive functor that preserves direct sums. Therefore $\h c{\fa\bar{R}}{\cdot}$ is naturally isomorphic to  $\h c{\fa \bar{R}}{\bar{R}}\otimes_{\bar{R}}(\cdot)$ on the category of $\bar{R}$-modules; see \cite[Theorem 5.45]{r}. Since $N$ is an $\bar R$-module, $N\otimes_RM$ has $\bar R$-module structure for each $R$-module $M$ and so, on the category of $R$-modules, we have
\begin{align*}
&\h c{\fa}{N\otimes_R(\cdot)}\cong\h c{\fa \bar{R}}{N\otimes_R(\cdot)}\cong N\otimes_R(\cdot)\otimes_{\bar{R}}\h c{\fa\bar{R}}{\bar{R}}\\
&\cong(\cdot)\otimes_RN\otimes_{\bar{R}}\h c{\fa\bar{R}}{\bar{R}}
\cong (\cdot)\otimes_R\h c {\fa\bar{R}}{N}
\cong(\cdot)\otimes_R\h c{\fa}N.
\end{align*}
\end{proof}
\begin{cor} \label{cor1} Let $\fa$ be an ideal of $R$ and  $N$  a finitely generated $R$-module of dimension $n$ such that $N\neq \fa N$. Let  $c:=\cdd R{\fa}{N}$  and $\fp\in\supp_R(N)$ is such that $\cdd R {\fa}{R/\fp}=c$.
\begin{enumerate}[\rm(i)]
\item If $\dim_R(R/\fp)=c$, then $\Ht N {\ann_R(\h {c}{\fa}N)}=0$.
\item If $\dim_R(R/\fp)>c$, then $\Ht N {\fp}\leq n-c-1$.
\end{enumerate}
In particular, if $n\geq 1$ and $c=n-1$, then $\Ht N {\ann_R (\h {c}{\fa}N)}=0$.
\end{cor}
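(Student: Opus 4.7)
Parts (i) and (ii) should follow directly from Theorem \ref{annh2} and a standard dimension inequality, and the final ``in particular'' from a case analysis on $\dim_R R/\fp$.

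For (i), the hypothesis $\ce \fa {R/\fp} = \dim_R R/\fp = c$ places $\fp$ in the set $\Sigma$ of Theorem \ref{annh2}, whose upper bound therefore yields $J \subseteq C_\fp(\ann N)$. Lemma \ref{cont}(iii) then gives $C_\fp(\ann N) \subseteq \fq$ for every $\fq \in \supp_R(N)$ with $\fq \subseteq \fp$; specializing $\fq$ to a minimal prime of $\ann N$ beneath $\fp$ produces $\Ht N J = 0$.

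For (ii), I would pass to $\bar R := R/\ann N$, a Noetherian ring of Krull dimension $d$. The classical inequality $\operatorname{ht}(\bar\fp) + \dim(\bar R/\bar\fp) \leq d$ (obtained by concatenating saturated prime chains below and above $\bar\fp$), combined with $\dim(\bar R/\bar\fp) = \dim_R R/\fp > c$, forces $\Ht N \fp = \operatorname{ht}(\bar\fp) \leq d - c - 1$.

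For the final ``in particular'' with $c = d-1$, I would pick $\fp \in \ass_R(N)$ realizing $\ce \fa {R/\fp} = c$; such $\fp$ exists since $c$ equals the maximum of $\ce \fa {R/\fq}$ over $\fq \in \ass_R(N)$. Since $c \leq \dim_R R/\fp \leq d$ and $c = d-1$, the value $\dim_R R/\fp$ is either $d-1$ or $d$. In the former case part (i) immediately yields $\Ht N J = 0$. In the latter, part (ii) gives $\Ht N \fp = 0$, so $\fp \in \mass_R(N)$, but upgrading this to $J \subseteq \fp$ is the main technical obstacle. I would attack it by noting that in this subcase $\Delta \subseteq \mass_R(N) \cap \assh_R(N)$, and then using a prime-avoidance/regular-element reduction: if $J \not\subseteq \fq$ for every $\fq \in \Delta$, one may select $r \in J$ that is a nonzerodivisor on $N/\T \fa N$; the long exact sequence in local cohomology induced by multiplication by $r$, together with $\h {c+1}\fa{N/\T \fa N} = 0$, then yields $\h c \fa N \cong \h c \fa {(N/\T \fa N)/r(N/\T \fa N)}$, where the right-hand module has Krull dimension $c = d-1$. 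This reduces the analysis to the already-understood case where cohomological dimension equals Krull dimension (governed by the equality $\ann \h c \fa M = \ann(M/\T \fa M)$ mentioned in the introduction), from which a contradiction with the choice of $r$ can be extracted.
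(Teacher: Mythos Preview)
Your arguments for (i) and (ii) coincide with the paper's: Theorem~\ref{annh2}(i) together with Lemma~\ref{cont}(iii) for (i), and the inequality $\dim_R(R/\fp)\le d-\Ht N\fp$ for (ii).

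For the final ``in particular'' your route is genuinely different and more involved. The paper sidesteps the obstacle you identify by choosing $\fp$ to lie in $\V(J)$ from the outset: Lemma~\ref{lem3} gives $\h c\fa{N/JN}\cong\h c\fa N/J\h c\fa N=\h c\fa N\ne 0$, so $\ce\fa{N/JN}=c$ and hence some $\fp\in\supp_R(N/JN)=\V(J)$ satisfies $\ce\fa{R/\fp}=c$. With $J\subseteq\fp$ built in, the case split is immediate: if $\dim_R(R/\fp)=c$ use (i); otherwise (ii) gives $\Ht N\fp=0$, and $J\subseteq\fp$ then yields $\Ht N J=0$ directly. Your reduction via a regular element $r\in J$ on $N/\T\fa N$ and the known $c=\dim$ formula can be completed, but two points are loose. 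First, the claim $\Delta\subseteq\mass_R(N)\cap\assh_R(N)$ does not follow from the case hypothesis on the \emph{single} $\fp$ you picked; you must first dispose of every $\fp'\in\Delta$ with $\dim_R(R/\fp')=c$ via (i), after which all remaining $\fp'\in\Delta$ have $\dim_R(R/\fp')=d$. Second, the promised ``contradiction with the choice of $r$'' is not spelled out and is not really how the argument closes: what $J=\ann(M/\T\fa M)$ actually produces is a prime $\fp_0\in\ass_R(M/\T\fa M)$ lying in $\Sigma$, and it is then part (i) applied to $\fp_0$ (not any property of $r$) that yields $\Ht N J=0$. So your detour works, but only by circling back through (i); the paper's one-line use of Lemma~\ref{lem3} is the cleaner shortcut.
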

\begin{proof}
 We set $J:=\ann_R (\h {c}{\fa}N)$. If $\dim_R(R/\fp)=c$, then $J\subseteq \ann_R (N/C^N_{\fp}(0))=C_\fp(\ann_R (N))$ by Theorem \ref{annh2}. Hence (i) is an immediate consequence of  Lemma \ref{cont}(iii). Now to prove (ii), suppose that $\dim_R(R/\fp)>c$. Therefore
$$c=\cdd R {\fa}{R/\fp}<\dim_R(R/\fp)\leq n-\Ht N {\fp}$$
and hence $\Ht N {\fp}<n-c$. This proves (ii). Finally, assume that $c=n-1$. By Lemma \ref{lem3}, $\h c{\fa}{N/J N}\cong\h c{\fa}N/J \h c{\fa}N$. Since $J\h c{\fa}N=0$,
 $\h c{\fa}{N/J N}\cong\h c{\fa}N$.
Therefore $\cdd R {\fa}{N/JN}=c$. Hence there exists $\fp\in\supp_R(N/JN)=\V(J)$ such that $\cdd R {\fa}{R/\fp}=c$. If $\dim_R(R/\fp)=c$, then $\Ht NJ=0$ by (i). Otherwise, by (ii),  $\Ht N{\fp}=0$. As  $J\subseteq\fp$, we obtain  $\Ht NJ=0$.
\end{proof}
\begin{rem}\label{rem2} Let $\fa, \fb$ be  ideals of $R$ and $N$ a finitely generated $R$-module. It follows from the Independence Theorem   that $\h i{\fa}N\cong\h i{\fb}N$ for all $i$ when
$\fa+\ann_R(N)=\fb+\ann_R(N)$; see \cite[Theorem 4.2.1]{bs}. This fact is used in Example \ref{exa}. Furthermore, we proved, in \cite[Theorem 2.2]{f2018}, that
\begin{align*}
&\inf\{i\in\N_0: \h i{\fa}N\ncong\h i{\fb}N\}=\fgrad {\fa+\fb}{\fa\cap\fb}N\\
&=\inf\{\depth {N_\fp}: \fp\in \V(\fa+\ann_R(N))\triangle\V(\fb+\ann_R(N))\},
\end{align*}
where $A\triangle B=A\cup B-A\cap B$ denotes the symmetric difference of the sets $A$ and $B$ and for ideals $\fa$ and $\fb$, $\fgrad{\fa}{\fb}N$ denotes
the $\fa$-filter grade of  $\fb$ on $N$; see \cite{f2015} for definition and basic properties. In particular, $\h i{\fa}N\cong\h i{\fb}N$ for all $i$ if and only if
$\surd(\fa+\ann_R(N))=\surd(\fb+\ann_R(N))$.
\end{rem}
Now, by using Theorem \ref{annh2}, we construct a counterexample to Lynch's conjecture  which extends  the examples given in  \cite{b2017} and \cite{sw}.

\begin{ex}\label{exa} Let $S$ be a commutative Noetherian ring of finite dimension $d\geq 3$ such that there exists a subset $U:=\{u_1,\dots, u_d\}$ of $S $ with $\Ht SU=d$ and suppose that  each ideal generated by a subset of $U$ is a prime ideal of $S$. (When $S$ is a local ring, then it is easy to see that $S$ must be  a regular ring with maximal ideal $(U)$  and $U$ is a regular system of parameters for $S$.)
 Let $X$, $Y$ and $Z$ be disjoint non-empty subsets of $U$ such that $|X|\leq|Y|\leq|Z|$ (here, for a set $A$, $|A|$ denotes the cardinal number of  $A$) and let $X'$ and  $Y'$ be non-empty subsets of $X$ and $Y$ respectively. Set $J:=(X)\cap (Y)\cap (Z)$, $R:=S/J$ and  $I:=(X')+(Y')+J/J$. Then
\begin{enumerate}[\rm(i)]
\item $\ass_R(R)=\{\fp_1:=(X)/J, \ \fp_2:=(Y)/J, \ \fp_3:=(Z)/J\}$.
\item $\grad{R} I{R/\fp_1}=\cdd R I{R/\fp_1}=|Y'|$, $\grad{R} I{R/\fp_2}=\cdd R I{R/\fp_2}=|X'|$ and $\grad{R} I{R/\fp_3}=\cdd R I{R/\fp_3}=|X'|+|Y'|$.
\item $\dim_R(R/\fp_1)=d-|X|$, $\dim_R(R/\fp_2)=d-|Y|$ and $\dim_R(R/\fp_3)=d-|Z|$.
\item $\gam IR=0$,  $\dim_R(R)=\dim_R(R/\gam IR)=d-|X|$ and $c:=\cdd R IR=|X'|+|Y'|$.
\item Set $q:=(U-X'\cup Y')/J$, then $\fp_3\subseteq\fq$ and $\cdd R I{R/\fq}=\dim_R(R/\fq)=c$.
\item  $\ann_R (\h cIR)=(Z)/J$ and $\dim_R(R/\ann_R (\h cIR)=d-|Z|$. In particular,  $$\dim_R(R/\gam IR)-\dim_R(R/\ann_R (\h cIR)=|Z|-|X|.$$
\end{enumerate}
\end{ex}
\begin{proof}
Since $\Ht SU=d$, Krull's Generalized Principal Ideal Theorem \cite[Theorem 13.5]{mat}  implies that $u_i\notin (U\setminus\{u_i\})$ for all $1\leq i\leq d$. For each subset $V$  of $U$, since $(V\setminus\{u_i\})$ is a prime ideal of $S$,  $\ass_S(S/(V\setminus\{u_i\}))=\{(V\setminus\{u_i\})\}$  and so $u_i$ is a non-zerodivisor on $S/(V\setminus\{u_i\})$ for all $1\leq i\leq d$ . Hence every permutation of $u_1,\dots,u_d$ is an $S$-sequence.

    Next, for each $1\leq i\leq d$,  since $u_i\notin(u_1,\dots,u_{i-1})$ and $(u_1,\dots,u_{i-1})$ is a prime ideal of $S$, we  have $\dim_S(S/(u_1,\dots,u_i))\leq \dim_S(S/(u_1,\dots,u_{i-1}))-1$. We can now repeat  this argument to deduce that  $\dim_S(S/(u_1,\dots,u_i))\leq \dim_S(S)-i=d-i$ for all $1\leq i\leq d$. On the other hand, the strict chain of prime ideals $(u_1,\dots,u_i)\subset\dots\subset(u_1,\dots,u_d)$ yields $\dim_S(S/(u_1,\dots,u_i))\geq d-i$. Therefore $\dim_S(S/(u_1,\dots,u_i))= d-i$ for all $1\leq i\leq d$. By renaming the elements of $U$, we deduce that $\dim_S(S/(V))=d-|V|$  for all subsets $V$ of $U$.  These  facts are used in the sequel.

(i) It is clear that $\fp_1\cap\fp_2\cap\fp_3=0$ is a minimal primary decomposition of $0$ in $R$ and $\ass_R(R)=\{\fp_1, \fp_2, \fp_3\}$.

(ii) It follows from the Independence Theorem (see  Remark \ref{rem2}) that
$$\h iI{R/\fp_1}\cong\h iI{S/(X)}\cong \h i{(X', Y')}{S/(X)}\cong\h i{(Y')}{S/(X)}.$$
We also have $$|Y'|=\grad S{(Y')}{S/(X)}\leq \cdd S {(Y')}{S/(X)}\leq \ara(Y')\leq |Y'|.$$
Therefore  $\h i{(Y')}{S/(X)}$ and consequently $\h iI{R/\fp_1}$ are non-zero only at  $i=|Y'|$.  Similarly, we have
$\h iI{R/\fp_2}\cong\h i{(X')}{S/(Y)}$  and $\h iI{R/\fp_3}\cong\h i{(X', Y')}{S/(Z)}.$
Thus $\h iI{R/\fp_2}$ is non-zero only at  $i=|Y'|$ and $\h iI{R/\fp_3}$ is non-zero only at  $i=|X'|+|Y'|$.

(iii) As was mentioned at the beginning of the proof, we have $\dim_R(R/\fp_1)=\dim_S(S/(X))=d-|X|$, $\dim_R(R/\fp_2)=\dim_S(S/(Y))=d-|Y|$ and $\dim_R(R/\fp_3)=\dim_S(S/(Z))=d-|Z|$.

(iv) Since $\spec(R)=\supp_R(\bigoplus_{\fp\in\ass_R(R)}R/\fp)$, we obtain
$$\textstyle{c:=\cdd R  IR=\cdd R  I{\bigoplus_{1\leq i\leq 3}R/\fp_i}=\max_{1\leq i\leq 3}\cdd R  I{R/\fp_i}=|X'|+|Y'|},$$
$$\textstyle{\dim_R(R)=\max_{1\leq i\leq 3}\dim_R({R/\fp_i})=d-|X|.}$$
Next,   as $X'$  and $Y'$ are nonempty  sets, there exists $x\in X'$ and $y\in Y'$. We have $x+y+J\in I\setminus \bigcup_{1\leq i\leq 3}\fp_i$ because $X, Y, Z$ are disjoint sets and $u_i\notin (U\setminus \{u_i\})$ for all $1\leq i\leq d$. Therefore  $x+y+J\in I$ is a non-zerodivisor on $R$ and hence $\gam IR=0$. This completes the proof of (iv).

(v) It is clear that $Z\subseteq U-X\cup Y\subseteq U-X'\cup Y'$ and consequently $\fp_3\subseteq \fq$. Also, it follows from the  Independence Theorem that (note that $J\subseteq (U-X'\cup Y')$)
$$\h iI{R/\fq}\cong\h iI{S/(U-X'\cup Y')}\cong\h i{(X', Y')}{S/(U-X'\cup Y')}.$$
Similar to (ii) we can deduce that $\h i{(X', Y')}{S/(U-X'\cup Y')}$ and consequently $\h iI{R/\fq}$ are non-zero only at $i=|X'|+|Y'|$. Therefore $\cdd RI{R/\fq}=c$. Finally, we have
$$\dim_R(R/\fq)=\dim_S(S/(U-X'\cup Y'))=d-|U-X'\cup Y'|=|X'|+|Y'|=c.$$

(vi) It follows from (v) and Theorem \ref{annh2}  that
$\ann_R (\h cIR)=\ann_R (R/\fp_3)=\fp_3$
and hence $\dim_R(R/\ann_R  (\h cIR))=d-|Z|$.  Therefore
$$\dim_R(R/\gam IR)-\dim_R(R/\ann_R  (\h cIR))=d-|X|-(d-|Z|)=|Z|-|X|.$$
\end{proof}
\begin{rem}
(i) (Bahmanpour's example \cite[Example 3.2]{b2017}). Let $S$ be a regular local ring of dimension $d\geq 7$ and $U:=\{u_1,\dots,u_d\}$ a system of parameters for $S$. Let  $l$ be an integer with  $7\leq l\leq d$. Set $X:=\{u_1, u_2\}$, $Y:=\{u_3, u_4\}$, $Z:=\{u_5,\dots, u_l\}$, $X':=\{u_1\}$ and $Y':=\{u_3\}$. Let
     $J:=(X)\cap(Y)\cap(Z)$, $R:=S/J$ and $I:=(X')+(Y')+J/J$. Then,  by  Example \ref{exa}, we have
          \begin{align*}
    & c:=\cdd R IR=|X'|+|Y'|=2,\\
    & \dim_R(R/\gam IR)=\dim_R(R)=d-|X|=d-2,\\
     &\ann_R (\h cIR)=(Z)/J=(u_5,\dots, u_l)/J,\\
    & \dim_R(R/ \ann_R (\h cIR))=d-|Z|=d-l+4<d-2.
     \end{align*}
    In particular,  Lynch's conjecture does not hold in this case. Note that Bahmanpour, in \cite[Example 3.2]{b2017}, obtained  $\dim_R(R/ \ann_R (\h cIR))$ without  computing $\ann_R (\h cIR)$.

    (ii) (Singh--Walther's example \cite{sw}). Let $K$ be a field and $S:=K[x, y, z_1, z_2]$ (or $S:=K[[x, y, z_1, z_2]]$). If we set $U:=\{x, y, z_1, z_2\}$, $X=X':=\{x\}$, $Y=Y':=\{y\}$, $Z:=\{z_1, z_2\}$, $J:=(X)\cap(Y)\cap(Z)=(xyz_1, xyz_2)$, $R:=S/J$ and $I:=(X')+(Y')+J/J=(x, y)/J$, then Example \ref{exa} implies that
        \begin{align*}
    & c:=\cdd R IR=|X'|+|Y'|=2,\\
    & \dim_R(R/\gam IR)=\dim_R(R)=d-|X|=4-1=3,\\
     &\ann_R (\h cIR)=(Z)/J=(z_1, z_2)/J,\\
    & \dim_R(R/ \ann_R (\h cIR))=d-|Z|=4-2=2.
     \end{align*}
    It follows that Lynch's conjecture is false.  This example also shows that \cite[Proposition 4.3 and Theorem 4.4]{l} are not true.
\end{rem}

We recall that an $R$-module $N$ is called {\it minimax} if there exists a finitely generated submodule $L$ of $N$ such that $N/L$ is Artinian. The class of minimax modules  includes all Noetherian and all Artinian modules; see \cite{zo}. Also, for an ideal $\fa$ of $R$,  Hartshorne \cite{h} defined an $R$-module $N$  to be {\it $\fa$-cofinite} if $\supp_R(N)\subseteq\V(\fa)$ and $\ext iR{R/\fa}N$ is finitely generated for all $i\in\N_0$. The following lemmas are needed to prove our next theorem.

\begin{lem}[{\cite[Theorem 1.6]{mel1999}}]\label{cofart} Let $(R, \fn)$ be a complete local ring, $\fa$ a proper ideal of $R$ and  $N$  an Artinian $R$-module. Then the following conditions on $N$ are equivalent:
\begin{enumerate}[\rm(i)]
\item $N$ is $\fa$-cofinite.
\item $(0:_{N}\fa)$ is finitely generated
\item For each attached prime ideal $\fp$ of $N$, $\surd(\fa+\fp)=\fn$.
\end{enumerate}
\end{lem}
\begin{lem}[{\cite[Corollary 4.4]{mel}}]\label{ser} Let $\fa$ be an ideal of $R$. The class of  $\fa$-cofinite minimax $R$-modules is closed under taking submodules,
quotients and extensions, i.e., it is a Serre subcategory of the category of $R$-modules.
\end{lem}
 Let $(R, \fn)$ be a complete  local ring, $\fa$ an ideal of $R$,  $N$ a finitely generated $R$-module with $N\neq \fa N$ and $c:=\cdd R{\fa}N$. In \cite[Theorem 2.4]{nr}, Rastgoo and Nazari proved that if $\h c{\fa}N$ is $\fa$-cofinite Artinian, then $\att_R(\h c{\fa}N)=\{\fp\in\mass_R(N): \dim_R(R/\fp)=c, \surd(\fa+\fp)=\fn\}$. The following lemma generalizes this theorem.
\begin{lem}\label{annh5} Let $(R, \fn)$ be a complete  local ring, $\fa$ an ideal of $R$,  $N$ a finitely generated $R$-module with $N\neq \fa N$ and $c:=\cdd R{\fa}N$. Suppose that  $\h c{\fa}N$ is Artinian and $(0:_{\h c{\fa}N}\fa)$ is finitely generated. Set $\Delta:=\{\fp\in\ass_R(N): \cdd R{\fa}{R/ \mathfrak {p}}=c\}$ and
$\Sigma:=\{\fp\in\mass_R(N): \dim_R(R/\fp)=c, \ \surd(\fa+\fp)=\fn\}$. Then
$$\matt_R(\h c{\fa}N)=\att_R(\h c{\fa}N)=\Delta=\Sigma,$$
 $$\textstyle{\T {\fa}N=\bigcap_{\fp\in\Sigma}C^N_\fp(0)},$$
 $$ \textstyle{\ann_R(\h c{\fa}N)=\ann_R(N/\T {\fa}N)}.$$
\end{lem}
\begin{proof}
Note that, by Lemma \ref{cofart}, $\h c{\fa}N$ is $\fa$-cofinite and Artinian. We prove the claimed equalities in some steps.

1) $\att_R(\h c{\fa}N)\subseteq\Delta$. Assume that $\fp\in\att_R(\h c{\fa}N)$. Hence by  Lemma \ref{lem3}, $\h c{\fa}{N/\fp N}\cong\h c{\fa}{N}/\fp\h c{\fa}N$.
     Thus $\h c{\fa}{N/\fp N}\neq 0$ because $\fp\in\att_R(\h c{\fa }{N})$.  Since $\att_R(\h c{\fa}N)\subseteq\V(\ann_R(N))$, we have $\supp_R(N/\fp N)=\supp_R(R/\fp)$ and so     $\cdd R{\fa}{R/\fp}=\cdd R{\fa}{N/\fp N}=c$. Therefore $\fp\in\Delta$.

2) $\Sigma\subseteq\Delta$.    If $\fp\in\Sigma$, then $\cdd R{\fa}{R/\fp}=\cdd R{\fa+\fp}{R/\fp}=\cdd R{\fn}{R/\fp}=\dim_R(R/\fp)=c$ and so $\fp\in\Delta$.

3) $\Delta\subseteq \Sigma$ and $\Delta\subseteq \matt_R(\h c{\fa}N)$. Assume that $\fp\in\Delta$
  and $L$ is an arbitrary $\fp$-primary submodule of $N$. Since $\ass_R(N/L)=\{\fp\}$, we obtain $\cdd R  {\fa}{N/L}=\cdd R {\fa}{R/\fp}=c$. Also,  it follows from the exact sequence $\h c{\fa}N\rightarrow\h c{\fa}{N/L}\rightarrow 0$ in view of Lemma \ref{ser} that $\h c{\fa}{N/L}$ is  $\fa$-cofinite  Artinian.  Now assume that $\fq\in\att_R(\h c{\fa}{N/L})$. Since
 $\att_R(\h c{\fa}{N/L})\subseteq\V({\ann_R (\h c{\fa}{N/L})})\subseteq\V(\ann_R (N/L)),$ we have $\fp\subseteq\fq$. By Lemma \ref{lem3}, $\h c{\fa}{N/\fq N}\cong\h c{\fa}{N}/\fq \h c{\fa}N$.
     Thus $\h c{\fa}{N/\fq N}\neq 0$ because $\fq\in\att_R(\h c{\fa }{N})$. Since $\supp_R(N/\fq N)=\supp_R(R/\fq)$, $\cdd R {\fa}{R/\fq}=\cdd R {\fa}{N/\fq N}=c$. Also, Lemma \ref{cofart} implies that $\surd(\fa+\fq)=\fn$ and hence, by Remark \ref{rem2}, $c=\cdd R {\fa}{R/\fq}=\cdd R {\fn}{R/\fq}=\dim_R{R/\fq}$. Therefore $\ann_R (\h c{\fa}N)\subseteq C_\fq(\ann_R (N))$ by Theorem \ref{annh2}.  We show that $\fq$ is minimal in $\supp_R(N)$. Assume that $\fq'\in{\rm Min}\supp_R(N)$ is such that $\fq'\subseteq\fq$. Then, by Lemma \ref{cont}(iii),
  $\ann_R (\h c{\fa}N)\subseteq\fq'$. Thus $\fq'\in{\rm Min}\V(\ann_R (\h c{\fa}N))={\rm Min}\att_R(\h c{\fa}N)$ and so  $\surd(\fa+\fq')=\fn$ by Lemma \ref{cofart}. Hence $\cdd R {\fa}{R/\fq'}=\cdd R {\fn}{R/\fq'}=\dim_R{R/\fq'}$. Also, we have $c=\cdd R {\fa}{R/\fq}\leq \cdd R {\fa}{R/\fq'}\leq \cdd R {\fa}{N}=c$. It follows that $\cdd R {\fa}{R/\fq'}=\dim_R{R/\fq'}=c$. Therefore  $\fq=\fq'$ and consequently  $\fq$ is a minimal element of $\supp_R(N)$. Since $\fp\subseteq\fq$, we have $\fp=\fq$. The equalities $\fp=\fq=\fq'$ show that $\fp\in\matt_R(\h c{\fa}N)$ and $\fp\in\Sigma$. Therefore $\Delta\subseteq \Sigma$ and $\Delta\subseteq \matt_R(\h c{\fa}N)$.

  (1), (2) and (3)  prove  that $\matt_R(\h c{\fa}N)=\att_R(\h c{\fa}N)=\Delta=\Sigma$. Finally, since $\Delta=\Sigma$,  Theorem \ref{annh2} implies that $\ann_R(\h c{\fa}N)=\ann_R(N/\T {\fa}N)$. Also it is clear that $\T {\fa}N=\bigcap_{\fp\in\Sigma}C^N_\fp(0)$ (see Definition \ref{def1}).  This completes the proof.
\end{proof}

\begin{thm}\label{annh4}
Let $N$ be a  finitely generated $R$-module, $\fa$ an ideal of $R$ such that $N\neq\fa N$ and $c:=\cdd R {\fa}N$. Let $0=N_1\cap\hdots\cap N_n$  be a minimal primary decomposition of the zero submodule of $N$ with $\ass_R(N/N_i):=\{\fp_i\}$ for all $1\leq i\leq n$. Set $\Delta:=\{\fp\in\ass_R(N): \cdd R {\fa}{R/\fp}=c\}$. If $(0:_{\h c{\fa}{N/N_i}}\fa)$ is finitely generated for all $\fp_i\in\Delta$ (or $(0:_{\h c{\fa}{N}/M}\fa)$ is finitely generated for all submodules $M$ of $\h c{\fa}N$), then
$$\ann_R (\h c{\fa}N)=\ann_R (N/\T {\fa}N).$$
In particular, if $N$ is coprimary and $(0:_{\h c{\fa}N}\fa)$ is finitely generated, then $$\ann_R (\h c{\fa}N)=\ann_R (N).$$
\end{thm}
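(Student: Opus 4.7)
The inclusion $\ann(N/\T{\fa}{N}) \subseteq \ann(\h{c}{\fa}{N})$ is provided by Theorem~\ref{annh2}(i), so my task reduces to establishing the reverse inclusion. The plan is to perform two standard reductions to the ``in particular'' primary case, and then attack that case directly using the finite-generation hypothesis.

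For the first reduction, applying the local cohomology long exact sequence to $0 \to \T{\fa}{N} \to N \to N/\T{\fa}{N} \to 0$ together with the fact that $\ce{\fa}{\T{\fa}{N}} < c$ (which forces $\h{j}{\fa}{\T{\fa}{N}} = 0$ for all $j \geq c$) yields $\h{c}{\fa}{N} \cong \h{c}{\fa}{N/\T{\fa}{N}}$. Since $\T{\fa}{N} = \bigcap_{\fp_i \in \Delta} N_i$, the zero submodule of $N/\T{\fa}{N}$ has primary decomposition $\bigcap_{\fp_i \in \Delta}(N_i/\T{\fa}{N})$ with quotients isomorphic to $N/N_i$; hence the finite-generation hypothesis transfers, and I may assume $\T{\fa}{N} = 0$, equivalently $\ass_R(N) = \Delta$. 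For the second reduction, in this setting $\ann(N) = \bigcap_{\fp_i \in \Delta}\ann(N/N_i)$, and each surjection $N \twoheadrightarrow N/N_i$ induces a surjection $\h{c}{\fa}{N} \twoheadrightarrow \h{c}{\fa}{N/N_i}$ (from the long exact sequence, using $\h{c+1}{\fa}{N_i} = 0$ since $\ce{\fa}{N_i} \leq c$). Thus it suffices to prove the claim for each $\fp_i$-primary quotient $N/N_i$ in place of $N$, which is exactly the ``in particular'' statement.

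The main obstacle is the primary case. Let $M$ be $\fp$-primary with $\ce{\fa}{M} = c$, set $H := \h{c}{\fa}{M}$ and $J := \ann(H)$; the goal is $JM = 0$. Suppose for contradiction $JM \neq 0$. Since $\ass_R(JM) \subseteq \ass_R(M) = \{\fp\}$ forces $\ass_R(JM) = \{\fp\}$, the submodule $JM$ satisfies $\ce{\fa}{JM} = c$, so $\h{c}{\fa}{JM} \neq 0$. The long exact sequence for $0 \to JM \to M \to M/JM \to 0$, combined with Lemma~\ref{lem3} applied to $\fb = J$ (which gives $\h{c}{\fa}{M/JM} \cong H/JH = H$ via the natural projection, hence the induced map $H \to \h{c}{\fa}{M/JM}$ is an isomorphism), forces the inclusion-induced map $\h{c}{\fa}{JM} \to H$ to vanish.

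My plan for extracting the contradiction is to iterate this setup along the descending chain $M \supsetneq JM \supsetneq J^2 M \supsetneq \cdots$ of $\fp$-primary submodules, each with nonzero top local cohomology and vanishing transition maps on $\h{c}{\fa}{\cdot}$, and then to leverage the finite generation of $(0 :_{H} \fa)$---together with the alternative hypothesis on submodules of $H$ where needed---to conclude that the induced chain of $\fa$-socles of $\h{c}{\fa}{J^n M}$ stabilizes. The technically hardest part, and the main obstacle I anticipate, is converting this stabilization into the contradiction: using Nakayama-style reasoning (likely supported by an Artin--Rees calculation) to deduce that the stable socle must be zero, forcing $H = 0$ by the $\fa$-torsion property of $H$ and contradicting $H \neq 0$. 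This resolves the primary case and hence, by the two reductions above, the full theorem.
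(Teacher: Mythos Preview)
Your reductions to the primary case are correct and clean, and the observation that the map $\h{c}{\fa}{JM}\to H$ vanishes (via Lemma~\ref{lem3}) is valid. The argument breaks down precisely where you yourself flag ``the main obstacle'': you have not actually produced a contradiction, only a hope for one. Several concrete problems obstruct the plan. First, the hypothesis controls $(0:_{H}\fa)$, but says nothing about $(0:_{\h{c}{\fa}{J^nM}}\fa)$: the modules $\h{c}{\fa}{J^nM}$ are \emph{not} quotients of $H$ (you have just shown their images in $H$ are zero), so neither form of the hypothesis bounds their $\fa$-socles. Second, you claim the transition maps $\h{c}{\fa}{J^{n+1}M}\to\h{c}{\fa}{J^nM}$ vanish, but what you actually proved is only that each composite into $H$ vanishes; to repeat the argument at level $n$ you would need $J\h{c}{\fa}{J^nM}=0$, which is not established. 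Third, even granting some stabilization of socles, no Artin--Rees or Nakayama statement that I can see turns ``$\h{c}{\fa}{J^nM}\neq 0$ with zero image in $H$'' into a contradiction; the relevant modules are not finitely generated, and you have no handle on $\bigcap_n J^nM$ in a non-local ring. As written this is an outline of a wish, not a proof.

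The paper's argument is structurally different and shows how the socle hypothesis is really used. One first treats the case where $R$ is complete local and $\h{c}{\fa}{N}$ is $\fa$-cofinite Artinian: there one has attached primes, and for any $\fp\in\Delta$ and any $\fp$-primary $L\subseteq N$, cofiniteness forces every $\fq\in\att_R(\h{c}{\fa}{N/L})$ to satisfy $\sqrt{\fa+\fq}=\fm$, hence $\ce{\fa}{R/\fq}=\dim_R(R/\fq)=c$; a minimality argument then yields $\fp=\fq$, so $\dim_R(R/\fp)=c$ and Theorem~\ref{annh2}(ii) applies directly. The general case is reduced to this one by localizing at a minimal prime $\fn$ of $\supp_R(\h{c}{\fa}{N/N_i})$ and completing: the finite generation of $(0:_{\h{c}{\fa}{N/N_i}}\fa)$ is used exactly once, to show (via Melkersson's criterion) that the completed local cohomology module is $\fa\widehat{R_\fn}$-cofinite Artinian. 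In short, the hypothesis is a device for passing to the Artinian cofinite regime where attached-prime machinery is available, not a finiteness condition to iterate against a descending chain.
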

\begin{proof}
Set $T:=\T {\fa}N=\bigcap_{\fp_i\in\Delta}N_i$. Since $\cdd R {\fa}T<c$,
$\h c{\fa}N\cong\h c{\fa}{N/T}$ and hence   $\ann_R (N/T)\subseteq\ann_R (\h c{\fa}{N/T})=\ann_R (\h c{\fa}N)$. Now we show that  $$\ann_R (\h c{\fa}N)\subseteq\ann_R (N/T).$$

     To prove the claimed inclusion, we assume that $r\in R$, $r\notin \ann_R (N/T)$ and it is sufficient for us  to show that $r\notin\ann_R(\h c{\fa}N)$. Since $rN\nsubseteq T$, $rN\nsubseteq N_i$ for some $N_i$ with $\fp_i\in\Delta$. As $\ass_R (r(N/N_i))=\ass_R(N/N_i)=\{\fp_i\}$, we see $\supp_R (r(N/N_i))=\supp_R(N/N_i)=\supp_R(R/\fp_i)$ and so
 $\cdd R {\fa}{r(N/N_i)}=\cdd R {\fa}{N/N_i}=c.$
Now suppose that $\fn\in{\rm Min}\supp_R(\h c{\fa}{N/N_i})$. It follows from $\supp_{R_\fn} ((r(N/N_i))_\fn)=\supp_{R_\fn}((N/N_i)_\fn)=\V(\fp_iR_\fn)$ that
$\cdd R {\fa R_\fn}{r/1(N/N_i)_\fn}=\cdd R {\fa R_\fn}{(N/N_i)_\fn}=c$. Therefore   $\h c{\fa R_\fn}{r/1(N/N_i)_\fn}\neq 0$ and hence
$$\h c{\fa \widehat{R_\fn}}{(r/1) \hat\,((N/N_i)_\fn)\, \hat{}\, }\neq 0,$$ where " $\hat{}$ " denotes the $\fn R_\fn$-adic completion.

  As $(0:_{\h c{\fa}{N/N_i}}\fa)$ is a finitely generated $R$-module, in view of \cite[Theorem 7.11]{mat},  $(0:_{\h c{\fa \widehat{R_\fn}}{((N/N_i)_\fn)\, \hat{}\,}}\fa\widehat{R_\fn})$ is a finitely generated  $\widehat{R_\fn}$-module. Also, by \cite[Theorem 4.3.2]{bs}, \cite[Theorem 23.2(ii)]{mat} and these facts that $\ass_{R_\fn}(\h c{\fa R_\fn}{(N/N_i)_\fn}=\{\fn R_\fn\}$ and $\widehat{R_\fn}$ is a local ring with maximal ideal $\fn\widehat{R_\fn}$,  we have
\begin{align*}
&\ass_{\widehat{R_\fn}}(0:_{\h c{\fa \widehat{R_\fn}}{((N/N_i)_\fn)\, \hat{}\,}}\fa\widehat{R_\fn})=\V(\fa\widehat{R_\fn})\cap\ass_{\widehat{R_\fn}}({\h c{\fa \widehat{R_\fn}}{((N/N_i)_\fn)\, \hat{}\,}})\\
 &= \ass_{\widehat{R_\fn}}({\h c{\fa \widehat{R_\fn}}{((N/N_i)_\fn)\, \hat{}\,}})
 =\ass_{\widehat{R_\fn}}({\h c{\fa {R_\fn}}{{(N/N_i)_\fn}}}\otimes_{\widehat{R_\fn}}\widehat{R_\fn})\\
 &=\textstyle{\bigcup_{\fp R_\fn\in\ass_{R_\fn}(\h c{\fa R_\fn}{(N/N_i)_\fn})}\ass_{\widehat{R_\fn}}(\widehat{R_\fn}/\fp \widehat{R_\fn})}
 =\ass_{\widehat{R_\fn}}(\widehat{R_\fn}/\fn \widehat{R_\fn})=\{\fn \widehat{R_\fn}\}.
 \end{align*}
It follows that $(0:_{\h c{\fa \widehat{R_\fn}}{((N/N_i)_\fn)\, \hat{}\,}}\fa\widehat{R_\fn})$ has finite length and so ${\h c{\fa \widehat{R_\fn}}{((N/N_i)_\fn)\, \hat{}\,}}$ is an Artinian $\widehat{R_\fn}$-module by Melkersson's Theorem (see \cite[Theorem 7.1.2]{bs}).

  Now,  $\h c{\fa \widehat{R_\fn}}{(r/1) \hat\,((N/N_i)_\fn)\, \hat{}\, }\neq 0$ yields $(r/1) \hat\,((N/N_i)_\fn)\, \hat{}\, \nsubseteq {\rm T}(\fa \widehat{R_\fn}, ((N/N_i)_\fn)\, \hat{} \,)$ because $\T{\fa \widehat{R_\fn}}{((N/N_i)_\fn)\, \hat{} \,}$  is the largest $\widehat{R_\fn}$-submodule $S$ of $((N/N_i)_\fn)\, \hat{}$
  such that $\h c{\fa \widehat{R_\fn}}{S}=0$.
  By Lemma \ref{annh5}, $$\ann_{\widehat {R_\fn}}(\h c{\fa \widehat{R_\fn}}{((N/N_i)_\fn)\, \hat{}\, })=\ann_{\widehat {R_\fn}}(((N/N_i)_\fn)\, \hat{}/\T{\fa \widehat{R_\fn}}{((N/N_i)_\fn)\, \hat{} \,}).$$
  Thus  $(r/1) \hat\,\h c{\fa \widehat{R_\fn}}{((N/N_i)_\fn)\, \hat{}\, }\neq 0$ and so $r\h c{\fa}{N/N_i}\neq 0$.  Next it follows from the exact sequence $\h c{\fa}N\rightarrow \h c {\fa}{N/N_i}\rightarrow 0$ that $r\h c{\fa}N\neq 0$, as required. This proves the first claimed equality. Note that if $(0:_{\h c{\fa}{N}/M}\fa)$ is finitely generated for all submodules $M$ of $\h c{\fa}N$, then the $R$-module $(0:_{\h c{\fa}{N/N_i}}\fa)$ is finitely generated for all $1\leq i\leq n$ because $\h c{\fa}{N/N_i}$ is a homomorphic image  of $\h c{\fa}N$. Finally assume that $N$ is coprimary and  $\ass_R(N)=\{\fp\}$.  Then $0$ is a $\fp$-primary submodule of $N$ and so $\T {\fa}N=0$.  Therefore, by the first part of theorem, we have $\ann_R(\h c{\fa}N)=\ann_R(N)$.
\end{proof}

\begin{cor}\label{cor2} Let $N$ be a non-zero finitely generated $R$-module, $\fa$ an ideal of $R$ with $N\neq\fa N$ and $c:=\cdd R{\fa}N$. Suppose that one of the following conditions holds:

\begin{enumerate}[\rm(i)]
\item $c\leq 1$;
\item $\dim_R(N/\fa N)\leq 1$;
\item $\dim_R(N)\leq 2$;
\item $\h {c}{\fa}N$ is $\fa$-cofinite minimax.
\end{enumerate}
Then
$$\ann_R (\h {c}{\fa}N)=\ann_R (N/\T {\fa}N),$$
 $$\Ht N{\ann_R(\h {c}{\fa}N)}=0,$$
and
\begin{align*}&\dim_R(R/\ann_R (\h {c}{\fa}N))\\
&=\max\{\dim_R(R/\fp): {\fp\in\mass_R(N),}\ {\cdd R {\fa}{R/\fp}=c}\}
\end{align*}
\end{cor}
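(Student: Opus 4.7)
The plan is to invoke Theorem \ref{annh4}: for each of the six hypotheses, I would verify that the finite-generation condition of that theorem is met, which gives at once $\ann(\h{c}{\fa}{N}) = \ann(N/\T{\fa}{N})$. The dimension formula then follows separately from the primary decomposition: since $\T{\fa}{N} = \bigcap_{\fp_i \in \Delta} N_i$ by construction, the images of $N_i$ for $\fp_i \in \Delta$ furnish a minimal primary decomposition of zero in $N/\T{\fa}{N}$ with associated primes exactly $\Delta$, so that $\dim_R(R/\ann(N/\T{\fa}{N})) = \dim_R(N/\T{\fa}{N}) = \max_{\fp \in \Delta}\dim_R(R/\fp)$.

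For hypotheses (i)--(iii) I would verify the first form of the finite-generation condition, namely that $(0 :_{\h{c}{\fa}{N/N_i}} \fa)$ is finitely generated for every $\fp_i \in \Delta$. In (i), when $c = 0$ this socle is $\gam{\fa}{N/N_i}$, a submodule of the finitely generated $N/N_i$; when $c = 1$, one reduces to the $\fa$-torsion-free case and, choosing a nonzerodivisor $a \in \fa$ on the resulting quotient, embeds $(0 :_{\h{1}{\fa}{N/N_i}} a)$ into $\gam{\fa}{(N/N_i)/a(N/N_i)}$ via the long exact sequence. In (ii), $\dim_R((N/N_i)/\fa(N/N_i)) \le \dim_R(N/\fa N) \le 1$, so Melkersson's cofiniteness theorem gives that $\h{c}{\fa}{N/N_i}$ is $\fa$-cofinite, in particular with finitely generated $\fa$-socle. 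In (iii), $c = \ce{\fa}{R/\fp_i} \le \dim_R(R/\fp_i) = \dim_R(N/N_i) \le 2$: when $c \le 1$ we reduce to (i); otherwise $c = \dim_R(N/N_i) = 2$, so $\h{c}{\fa}{N/N_i}$ is Artinian, and Melkersson's characterization of Artinianness in terms of the $\fa$-socle (as used in the proof of Theorem \ref{annh4}) makes $(0 :_{\h{c}{\fa}{N/N_i}} \fa)$ of finite length.

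For (iv)--(vi) I would use the alternative form of the hypothesis, asking that $(0 :_{\h{c}{\fa}{N}/M} \fa)$ be finitely generated for every submodule $M \subseteq \h{c}{\fa}{N}$. In (iv), the spectral sequence $\ext{p}{R}{R/\fa}{\h{q}{\fa}{N}} \Rightarrow \ext{p+q}{R}{R/\fa}{N}$ collapses to isomorphisms $\ext{i}{R}{R/\fa}{\h{c}{\fa}{N}} \cong \ext{i+c}{R}{R/\fa}{N}$, whence $\h{c}{\fa}{N}$ is $\fa$-cofinite. In (v) this is immediate, and in (vi) an induction on $c$ using the minimaxness of $\h{j}{\fa}{N}$ for $j < c$ delivers the same conclusion. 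In each case one then appeals to the Serre-subcategory results of Melkersson and Bahmanpour--Naghipour for $\fa$-cofinite (or $\fa$-cofinite minimax) modules, which are closed under quotients, so that every $\h{c}{\fa}{N}/M$ retains the $\fa$-cofinite property and hence has finitely generated $\fa$-socle. The main obstacle lies precisely in this last step for (iv) and (vi): bare $\fa$-cofiniteness of $\h{c}{\fa}{N}$ does not pass to arbitrary quotients without additional input, so one has to be careful to invoke exactly the right Serre-subcategory statement in each instance.
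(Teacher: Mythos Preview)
Your overall plan---reduce everything to Theorem~\ref{annh4} and read off the dimension formula from $\ass_R(N/\T{\fa}{N})=\Delta$---is the paper's plan, and your handling of (i), (ii), (v) and the dimension formula is essentially the same (the paper cites \cite[Corollary~3.14]{mel}, \cite[Corollary~2.7]{bn}, and the Serre-subcategory property \cite[Corollary~4.4]{mel} of $\fa$-cofinite minimax modules, applied to the specific quotient $\h{c}{\fa}{N/N_i}$ of $\h{c}{\fa}{N}$). For (iii) the paper is more direct: when $c=2$ it simply notes $c=\dim_R N$ and invokes the known top-degree formula \cite[Theorem~2.3]{asn}, rather than arguing via Artinianness.

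The real gap is in (iv) and (vi), precisely the obstacle you flag but do not close. You aim at the \emph{second} form of the hypothesis in Theorem~\ref{annh4}, that $(0:_{\h{c}{\fa}{N}/M}\fa)$ is finitely generated for \emph{every} submodule $M$. In (iv) the spectral sequence does show $\h{c}{\fa}{N}$ is $\fa$-cofinite; in (vi) your ``induction'' gives at best that $(0:_{\h{c}{\fa}{N}}\fa)$ is finitely generated, not cofiniteness of $\h{c}{\fa}{N}$ itself. Either way, since $\fa$-cofinite modules are \emph{not} closed under arbitrary quotients, there is no Serre-subcategory statement available, and you cannot conclude anything about $(0:_{\h{c}{\fa}{N}/M}\fa)$ for general $M$. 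Your proposal therefore does not reach the hypothesis of Theorem~\ref{annh4} in these two cases.

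The paper avoids this by never leaving the \emph{first} (weaker) hypothesis of Theorem~\ref{annh4}: it checks directly that $(0:_{\h{c}{\fa}{N/N_i}}\fa)$ is finitely generated for each $\fp_i\in\Delta$. Case~(iv) is declared a special case of~(vi). For~(vi), minimaxness for $j<c$ is first upgraded to $\fa$-cofinite minimax via \cite[Theorem~2.8]{ma}, and then \cite[Theorem~2.2]{f2013} is invoked to conclude that $(0:_{\h{c}{\fa}{N/N_i}}\fa)$ is itself $\fa$-cofinite minimax; being its own $\fa$-socle, it is then finitely generated. The key move you miss is to work with the specific module quotients $N/N_i$---where the information on $\h{j}{\fa}{N}$ for $j<c$ still propagates---rather than with arbitrary quotients of the single module $\h{c}{\fa}{N}$, where it does not.
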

\begin{proof}
Set $\Delta:=\{\fp\in\ass_R(N): \cdd R{\fa}{R/\fp}=c\}$. We have $\ann_R(N/\T{\fa}N)=\ann_R(N/\bigcap_{\fp\in\Delta}C^N_{\fp}(0))=\bigcap_{\fp\in\Delta}C_\fp(\ann_R(N))$. Since $\Delta\neq\emptyset$, it follows from Lemma \ref{cont}(iii) that $$\Ht N{\ann_R(N/\T{\fa}N)}=0.$$ Also,
 by Proposition \ref{prop5}(i), we have
 $$\dim_R(R/\ann_R(N/\T {\fa}N))=\max_{\fp\in\ass_R(N/\T {\fa}N)}\dim_R(R/\fp)=\max_{\fp\in\Delta}\ \dim_R(R/\fp).$$
 Now if $\fp\in\Delta$ and $\fq\in\mass_R(N)$ is such that $\fq\subseteq\fp$, then $c=\cdd R{\fa}{R/\fp}\leq\cdd R{\fa}{R/\fq}\leq c$ and so $\fq\in\Delta$. Therefore
  $$\dim_R(R/\ann_R(N/\T {\fa}N))=\max_{\fp\in\mass_R(N), \ \cdd R{\fa}{R/\fp}=c}\dim_R(R/\fp).$$
  Thus, to complete our proof,  we only need to prove that the first claimed equality $\ann_R(\h c{\fa}N)=\ann_R(N/\T {\fa}N)$ holds in all the given cases.

  Suppose that $0=N_1\cap\hdots\cap N_n$  is a minimal primary decomposition of the zero submodule of $N$ with $\ass_R(N/N_j):=\{\fp_j\}$ for all $1\leq j\leq n$. Assume that $\fp_i$ is an arbitrary element of $\Delta$. We claim that in  all the given cases either $\h c{\fa}{N/N_i}$ is $\fa$-cofinite or $(0:_{\h c{\fa}{N/N_i}}\fa)$ is finitely generated and hence we can deduce from Theorem \ref{annh4} that  the equality $\ann_R (\h c{\fa}N)=\ann_R (N/\T {\fa}N)$  holds.

(i) If  $\cdd R {\fa}N\leq 1$, then $\cdd R {\fa}{N/N_i}\leq 1$ and so, by \cite[Corollary 3.14]{mel}, $\h c{\fa}{N/N_i}$ is $\fa$-cofinite.

 (ii) If $\dim_R(N/\fa N)\leq 1$, then $\dim_R((N/N_i)/\fa(N/N_i))\leq 1$    and so $\h c{\fa}{N/N_i}$ is $\fa$-cofinite  by \cite[Corollary 2.7]{bn} (see also \cite[Theorem 1]{dm} and \cite[Theorem 1.1]{y} for the local case).

(iii) If $c=\dim_R(N/N_i)$, then $\h c{\fa}{N/N_i}$ is $\fa$-cofinite by \cite[Proposition 5.1]{mel} (see also \cite[Theorem 3]{dm} for the local case). If $c<\dim_R(N/N_i)$, then the result follows from (i).

 (iv) It follows from the exact sequence $\h c{\fa}N\rightarrow \h c{\fa}{N/N_i}\rightarrow 0$ that $\h c{\fa}{N/N_i}$ is also $\fa$-cofinite minimax because the category of $\fa$-cofinite minimax modules is a Serre subcategory of the category of $R$-modules; see Lemma \ref{ser}.
\end{proof}
   Bahmanpour in \cite[Theorem 2.9]{b2015}, as one of his main results,   computed the annihilator of $\h c{\fa}N$ when $c:=\cdd R{\fa}N=1$. The following corollary shows that our result in Corollary \ref{cor2}(i) coincides with that of Bahmanpour and gives an affirmative answer to Lynch's conjecture.
\begin{lem}\label{prop2} Let $N$ be  a non-zero finitely generated (coprimary) $R$-module with $\ass_R(N)=\{\fp\}$ and $\fa$ an ideal of $R$.
 Then  $\cdd R{\fa}N=0$ if and only if $\fa\subseteq\fp$.
\end{lem}
\begin{proof}


If  $\fa\subseteq \fp$, then, since $\surd(\ann_R(N))=\fp$, we obtain $\fa^nN=0$ for some $n\in\N$ and so $\gam {\fa}N=N\neq 0$. It follows that $\h i{\fa}N=0$ for all $i>0$ and consequently $\cdd R{\fa}N=0$. Conversely, assume that $\cdd R{\fa}N=0$. Hence $\gam {\fa}N\neq 0$. Thus $\ass_R(\gam {\fa}N)=\V {(\fa)}\cap\ass_R(N)\neq \emptyset$ and so $\fa\subseteq\fp$.
\end{proof}
\begin{cor} Let $\fa$ be an ideal of $R$ and $N$ a finitely generated $R$-module such that $\cdd R{\fa}N=1$. Then $\ann_R(\h 1{\fa}N)=\ann_R(N/\gam {\fb}N),$ where $\fb:=\bigcap_{\fp\in\Delta}\fp$ and $\Delta:=\{\fp\in\ass_R(N): \fa+\fp=R \textrm{ or } \fa\subseteq\fp\}$.
In particular, if for each $a\in\fa$, $1-a$ is a non-zerodivisor on $N$, then $\ann_R(\h 1{\fa}N)=\ann_R(N/\gam {\fa}N)$.
\end{cor}
\begin{proof}
By Corollary \ref{cor2}(i), $\ann_R(\h 1{\fa}N)=\ann_R(N/\T{\fa}N)$. Also, by Definition \ref{def1}, $\T{\fa}N=\gam {\fb'}N$, where $\fb'=\bigcap_{\fp\in\Delta'}\fp$ and $\Delta'=\{\fp\in\ass_R(N): \cdd R{\fa}{R/\fp}<1\}$.
Now assume that $\fp\in\ass_R(N)$. Then $\cdd R{\fa}{R/\fp}=-\infty$ if and only if $\fa(R/\fp)=R/\fp$ or equivalently $\fa+\fp=R$. Also, by Lemma \ref{prop2}, $\cdd R{\fa}{R/\fp}=0$ if and only if $\fa\subseteq\fp$. Therefore $\Delta'=\Delta$ and so $\fb=\fb'$. This proves the first part of the assertion.
 Now assume that for each $a\in\fa$, $1-a$ is a non-zerodivisor on $N$. Thus $\Delta=\{\fp\in\ass_R(N): \fa\subseteq\fp\}=\ass_R(\gam {\fa}N)$ and so
 $\fb=\surd(\ann_R(\gam {\fa}N))$. It follows that $\gam {\fb}N=\gam {\ann_R(\gam {\fa}N)}N=\gam {\fa}N$ because $\fa^t\gam {\fa}N=0$ for some $t\in\N_0$.
\end{proof}
Let $\fa$ be a proper ideal of $R$. Then, by  \cite[Corollary 3.3.3]{bs}, $\ara (\fa)$ is an upper bound for the invariant  $\cdd R{\fa}R$.
Hochster and Jeffries \cite[Theorem 2.6]{hj} proved that if $R$ is a domain of prime characteristic and $\cdd R{\fa}R=\ara (\fa)$, then $\h {\cdd R{\fa}R}{\fa}R$ is faithful. On the other hand, if $R$ is local, then  $\dim_R(R)-\dim_R(R/\fa)$ is a lower bound for  the invariant $\cdd R{\fa}R$; see the following lemma. If $R$ is a local domain and $\cdd R{\fa}R=\dim_R(R)-\dim_R(R/\fa)$, then   $\h {\cdd R{\fa}R}{\fa}R$ is faithful, see \cite[Theorem 2.7]{b2015}. In the following theorem we generalize this result.
We note that Theorem \ref{cd2} and Corollary \ref{sys} give  affirmative answers to Lynch's conjecture.

\begin{lem}[{\cite[Corrollary 2.3]{ey}}]\label{erd} Let $(R, \fn)$ be a local ring, $\fa$ an ideal of $R$ and $N$ a finitely generated $R$-module with $N\neq \fa N$. Then
$$\dim_R(N)-\dim_R(N/\fa N)\leq \cdd R{\fa}N.$$
 Moreover, if $\cdd R{\fa}N=\dim_R(N)-\dim_R(N/\fa N)$, then $$\h {\dim_R(N/\fa N)}{\fn}{\h {\cdd R{\fa}N}{\fa}N}\cong \h {\dim_R(N)}{\fn}N$$
and $\dim\supp_R({\h {\cdd R{\fa}N}{\fa}N})=\dim_R(N/\fa N)$.
\end{lem}

\begin{thm}\label{cd2} Let $(R, \fn)$ be a local ring, $\fa$ an ideal of $R$, $N$ a finitely generated $R$-module with $N\neq \fa N$ and $c:=\cdd R{\fa}N=\dim_R(N)-\dim_R(N/\fa N)$. Then \begin{enumerate}[\rm(i)]
\item $\ann_R(\h c{\fa}N)\subseteq\ann_R(N/\bigcap_{\fp\in\assh_R(N)}C^N_\fp(0)).$
 \item If   $N$ is  unmixed (that is, $\dim_R(N)=\dim_R(R/\fp)$ for all $\fp\in\ass_R(N)$), then $\ann_R(\h {c}{\fa}N)=\ann_R(N)$.
\item $\Ht N{\ann_R(\h c{\fa}N)}=0$.
\item $\dim_R(R/\ann_R(\h c{\fa}N))=\dim_R(N)$. Moreover, if  $c>0$, then   $\dim_R(N)=\dim_R(N/\gam {\fa}N)=\dim_R(R/\ann_R(\h c{\fa}N)$.
\end{enumerate}
\end{thm}
\begin{proof}
(i) By Lemma \ref{erd}, we have $\h {\dim_R(N/\fa N )}{\fn}{\h c{\fa}N}\cong\h {\dim_R(N)}{\fn}N$. Thus
$$\ann_R(\h c{\fa}N)\subseteq\ann_R(\h {\dim_R(N)}{\fn}N).$$
Now (i) follows from Theorem \ref{annh2} (set $\fa=\fn$ in Theorem \ref{annh2}).

(ii) Assume that $N$ is unmixed. Then   we have $\assh_R(N)=\ass_R(N)$ and hence $\bigcap_{\fp\in\assh_R(N)}C^N_\fp(0)=\bigcap_{\fp\in\ass_R(N)}C^N_\fp(0)=0$. Therefore, by (i),   $\ann_R(\h c{\fa}N)\subseteq\ann_R(N)$. The reverse inclusion is clear and so the claimed equality holds.

 (iii) It is an immediate consequence of (i).

(iv) Let $\fp\in\assh_R(N)$. By (i),   $\ann_R(N)\subseteq\ann_R(\h c{\fa}N)\subseteq C_\fp(\ann_R(N))\subseteq \fp$ and so
$$\dim_R(R/\fp)\leq\dim_R(R/\ann_R(\h c{\fa}N))\leq\dim_R(R/\ann_R(N)).$$
Therefore $\dim_R(R/\ann_R(\h c{\fa}N))=\dim_R(N)$. Now assume that $c>0$. Hence
$$\ann_R(N)\subseteq\ann_R(N/\gam {\fa}N)\subseteq\ann_R(\h c{\fa}{N/\gam {\fa}N})=\ann_R(\h c{\fa}N)\subseteq\fp$$
and so the claimed equalities hold.
\end{proof}
 The following corollary  generalizes two main theorems of \cite{b2015} (see \cite[Theorems 2.2 and 2.3]{b2015}).
\begin{cor}\label{sys} Let $(R, \fn)$ be a local ring and  $N$ a non-zero finitely generated $R$-module. Let $0\leq t\leq \dim_R(N)$ and $x_1,\dots,x_t\in\fn$ be a part of a system of parameters for $N$. Then the following statements hold.
\begin{enumerate}[\rm(i)]
\item $\cdd R{(x_1,\dots,x_t)}N=t$.
\item $\ann_R(\h t{(x_1,\dots,x_t)}N)\subseteq\ann_R(N/\bigcap_{\fp\in\assh_R(N)}C_\fp^N(0))$. In particular, if $N$ is unmixed, then $\ann_R(\h t{(x_1,\dots,x_t)}N)=\ann_R(N)$.
\item $\Ht N{\ann_R(\h t{(x_1,\dots,x_t)}N)}=0$.
\item $\dim_R(R/\ann_R(\h t{(x_1,\dots,x_t)}N))=\dim_R(N)$ and if $t>0$, then $$\dim_R(R/\ann_R(\h t{(x_1,\dots,x_t)}N))=\dim_R(N/\gam {(x_1,\dots,x_t)}N)=\dim_R(N).$$
    \item $\dim\supp_R(\h t{(x_1,\dots,x_t)}N))=\dim_R(N)-t$.
\end{enumerate}
\end{cor}
\begin{proof}
 We have $\dim_R(N/(x_1,\dots, x_t)N)=\dim_R(N)-t$ because $x_1,\dots, x_t$ is a part of a system of parameters of $N$. Hence, by  Lemma \ref{erd} and \cite[Corollary 3.3.3]{bs}, we obtain
\begin{align*}
&t=\dim_R(N)-\dim_R(N/(x_1,\dots, x_t)N)\\
&\leq\cdd R{(x_1,\dots,x_t)}N\leq\ara{(x_1,\dots,x_t)}\leq t.
\end{align*}
It follows that $$\cdd R{(x_1,\dots,x_t)}N=\dim_R(N)-\dim_R(N/(x_1,\dots, x_t)N)=t.$$
Therefore (i) holds; and (ii)--(iv) follow from Theorem \ref{cd2}. Finally, by Lemma \ref{erd}, we have $$\dim\supp_R(\h t{(x_1,\dots,x_t)}N))=\dim_R(N/(x_1,\dots,x_t)N)=\dim_R(N)-t.$$
\end{proof}
\section{\bf Annihilator of first non-zero local cohomology}
Let $\fa$ be an ideal of $R$ and $N$ a finitely generated $R$-module with $N\neq\fa N$. Our purpose in this section is to provide a sharp upper bound for the annihilator of
$\h {\grad{R} {\fa}N}{\fa}N$, see Theorem \ref{annh7} and Corollary \ref{cor3}. Also, we consider and compute $\dim_R(R/\ann_R(\h {\grad{R}{\fa}N}{\fa}N)$ in certain cases. Before that we need some lemmas.
\begin{lem}\label{loc} Let $(R, \fn)$ be a local ring and  $N$ a non-zero finitely generated  $R$-module. For each  $t\in\N_0$, set $\Delta(t):=\{\fp\in\ass_R(N): \dim_R(R/\fp)\geq t\}$,
$\Sigma(t):=\{\fp\in\mass_R(N): \dim_R(R/\fp)=t\}$ and $\Sigma'(t):=\{\fp\in\ass_R(N)\setminus\mass_R(N): \dim_R(R/\fp)=t\}$. There is the following bound for the annihilator of $\h t{\fn}N$:
\begin{align*}
&\textstyle{\ann_R(N/\bigcap_{\fp\in\Delta(t)}C_\fp^N(0))\subseteq\ann_R(\h t{\fn}N)}\\
&\textstyle{\subseteq\ann_R(N/\bigcap_{\fp\in\Sigma(t)}C_\fp^N(0))\cap(\bigcap_{\fp\in\Sigma'(t)}\fp).}
\end{align*}
\end{lem}
\begin{proof}
We set $S(t):=R\setminus\bigcup_{\fp\in\Delta(t)}\fp$ and $T(t):=R\setminus\bigcup_{\fp\in\Sigma(t)}\fp$. By \cite[Theorem 3.2]{f}, we have
$$\ann_R(N/C_{S(t)}^N(0))\subseteq\ann_R(\h t{\fn}N)\subseteq\ann_R(N/C_{T(t)}^N(0)).$$
Also, if $\fp\in\Sigma'(t)$, then \cite[Corollary 4.9]{s} implies that $\fp\in\att_R(\h t{\fn}N$ and so $\ann_R(\h t{\fn}N)\subseteq\fp$. This completes the proof.
\end{proof}
Note that if $N$ is a non-zero finitely generated $R$-module and $\fp\in\supp_R(N)$, then  we have $\ann_R(N/C^N_\fp(0))=C_\fp(\ann_R(N))\subseteq\fp$. Example \ref{exa2} shows that to improve the upper bound for the annihilator of local cohomology $\h t{\fn}N$ in the above lemma we can not replace  $\mass_R(N)$ by $\ass_R(N)$ in the   index set $\Sigma(t)$.
\begin{lem}\label{annh6}
Let $N$ be a finitely generated $R$-module and $\fa$ an ideal of $R$ with $N\neq\fa N$. Let $\fp\in\ass_R(N)$  with $\fa+\fp\neq R$ and  $n:=\Ht {R/\fp}{(\fa+\fp)/\fp}$. Then
$$\ann_R(\h n{\fa+\fp}N)\subseteq\fp.$$ If, in addition, $\fp\in\mass_R(N)$, then
$$\ann_R(\h n{\fa+\fp}N)\subseteq\ann_R(N/C^N_\fp(0)).$$
In particular, if $N$ is coprimary, then $\ann_R(\h {\Ht N{\fa}}{\fa}N)=\ann_R(N).$
\end{lem}
\begin{proof}
Assume that $\fq$ is a prime ideal of $R$ containing $\fa+\fp$ such that $\Ht{R/\fp}{\fq/\fp}=\Ht {R/\fp}{(\fa+\fp)/\fp}$. Then
$$\dim_{R_\fq}(R_\fq/\fp R_\fq)=\Ht {R_\fq/\fp R_\fq}{\fq R_\fq/\fp R_\fq}=\Ht {R/\fp}{\fq/\fp}=n.$$
Also  we have
$$(\h n{\fa+\fp}N)_\fq\cong \h n{(\fa+\fp)R_\fq}{N_\fq}\cong\h n{\fq R_\fq}{N_\fq}.$$
Since $\fp R_\fq\in\ass_{R_\fq}(N_\fq)$ and $\dim_{R_\fq}(R_\fq/\fp R_\fq)=n$, we have $\fp R_\fq\in\att_{R_\fq}(\h n{\fq R_\fq}{N_\fq})$ by \cite[Corollary 4.9]{s} and so
$\ann_{R_\fq}(\h n{\fq R_\fq}{N_\fq})\subseteq\fp R_\fq$. To prove $\ann_R(\h n{\fa+\fp}N)\subseteq\fp$, assume for the sake of contradiction that
$x\in\ann_R(\h n{\fa+\fp}N)$ and $x\notin\fp$. Since $\fp$ is prime, $x/1\notin\fp R_\fq$ and so $(x\h n{\fa+\fp}N)_\fq\cong x/1\h n{\fq R_\fq}{N_\fq}\neq 0$, a contradiction. This proves the first claimed inclusion.

Next, assume in addition that $\fp$ is a minimal element of $\ass_R(N)$. Thus $C:=C^N_\fp(0)$ is a $\fp$-primary submodule of $N$ (in fact, $C^N_\fp(0)$ is the unique $\fp$-primary component of every minimal primary decomposition of the zero submodule of $N$).
Therefore $\ass_R(N/C)=\{\fp\}$.
Since $\fp R_{\fq}$ is a minimal element of $\ass_{R_\fq}(N_\fq)$ with $\dim_{R_\fq}({R_\fq}/{\fp R_\fq})=n$, it follows from Lemma \ref{loc} that
$$\ann_{R_\fq}((\h n{\fa+\fp}N)_\fq)=\ann_{R_\fq}(\h n{\fq R_\fq}{N_\fq})\subseteq \ann_{R_\fq}(N_\fq/C^{N_\fq}_{\fp R_\fq}(0)).$$
Now suppose that $x\in R$ is such that $x N\nsubseteq C$. We have $\emptyset\neq\ass_R(x(N/C))\subseteq\ass_R(N/C)=\{\fp\}$ and so $\ass_R(x(N/C))=\{\fp\}$. Therefore
$\ass_{R_\fq}(x/1(N_\fq/C_\fq))=\{\fp R_\fq\}$ and hence $x/1\notin \ann_{R_\fq}(N_\fq/C_\fq)$. By Lemma \ref{lem5}, $C_\fq=(C^N_\fp(0))_\fq=C^{N_\fq}_{\fp R_\fq}(0)$.
 Hence the above inclusion implies that $x/1(\h n{\fa+\fp}N)_\fq\neq 0$ and so $x \h n{\fa+\fp}N\neq 0$. It follows that
$$\ann_R(\h n{\fa+\fp}N)\subseteq\ann_R(N/C)$$
because $x$ is an arbitrary element of $R$ with $x N\nsubseteq C$. The last assertion follows immediately from the first part, because if $\ass_R(N)=\{\fp\}$, then $C^N_\fp(0)=0$, $\Ht N{\fa}=\Ht {R/\fp}{(\fa+\fp)/\fp}$ and, since $\surd(\ann_R(N))=\fp$,  $\h {\Ht N{\fa}}{\fa}N=\h {\Ht N{\fa}}{\fa+\ann_R(N)}N=\h {\Ht N{\fa}}{\fa+\fp}N$.
\end{proof}

\begin{lem}\label{iso}
Let $N$ be a finitely generated $R$-module, $\fa$ an ideal of $R$ such that $\fa N\neq N$ and $n:=\grad{R} {\fa}N$. Then, for each ideal $\fb$ of $R$, there is the following isomorphism:
$$\h n{\fa+\fb}N\cong\gam {\fb}{\h n{\fa}N}.$$
\end{lem}
\begin{proof}
 Note that $n$ is a finite non-negative integer because  $\fa N\neq N$. By \cite[Theorem 10.47]{r}, there is the following Grothendieck's third quadrant
spectral sequence
$$\textstyle{E_2^{p, q}=\h p{\fb}{\h q{\fa}N}\underset{p}{\Rightarrow} \h n{\fa+\fb}N,}$$
where $n=p+q$.
We set ${H}^n:=\h n{\fa+\fb}N$. Hence  there is a finite filtration
$$0=\Phi_{n+1}{H}^n\subseteq\Phi_{n}{H}^n\subseteq\cdots\subseteq\Phi_0{H}^n={H}^n$$
of submodules of ${H}^n$ such that $\Phi_{p}{H}^n/\Phi_{p+1}{H}^n\cong E^{p, q}_\infty$ for all  $0\leq p\leq n$.
If $p>0$, then $q<n$ and so $E_2^{p, q}=0$. Therefore   $E_{\infty}^{p, q}=0$ because $E_\infty^{\fp, \fq}$ is a subquotient of $E_2^{\fp, \fq}$. Thus
$\Phi_1{H}^n=\Phi_2{H}^n=\dots=\Phi_{n+1}{H}^n=0$ and $H^n=\Phi_0H^n\cong E_\infty^{0,n}$.

On the other hand, for each $r\geq 2$, there is the following exact sequence
 \begin{displaymath}
 \xymatrix {E^{-r,n+r-1}_r\ar[rr]^{\quad d^{-r,n+r-1}_r}&&E^{0, n}_r\ar[r]^{d^{0, n}_r\qquad}&E^{r, n-r+1}_r }.
 \end{displaymath}
  $E^{-r,n+r-1}_r$ is a subquotients of $E^{-r,n+r-1}_2$ and so is zero. Also, since $n-r+1<n$, $E_2^{r, n-r+1}$ and, consequently, $E_r^{r, n-r+1}$ are zero. Therefore
  $$E^{0, n}_{r+1}= \textrm{ker}\,d^{0, n}_r/ \textrm{im}\, d^{-r,n+r-1}_r\cong E^{0, n}_r$$
  for all $r\geq2$. It follows that $E_\infty^{0, n}\cong E_2^{0, n}$ and so
  $$\h n{\fa+\fb}N=H^n\cong E_\infty^{0, n}\cong E_2^{0, n}=\gam {\fb}{\h n{\fa}N}.$$
\end{proof}
Now we are ready to state and prove the main theorem of this section which provides a bound for the annihilator of the first non-zero local cohomology module. We recall that if $N$ is a finitely generated $R$-module and $\fa$ is an ideal of $R$ with $N\neq\fa N$, then  ${\rm S}(\fa, N)=\bigcap_{\fp\in\ass_R(N),\, \fa+\fp\neq R}C^N_\fp(0)=\gam {\bigcap_{\fp\in\ass_R(N),\, \fa+\fp=R}\fp}{N}$ is the largest submodule $L$ of $N$ such that $L=\fa L$;  also, ${\rm S}(\fa, N)=0$ if and only if for each $a\in\fa$, $1-a$ is a non-zerodivisor on $N$; see  Definition \ref{def1}, Lemma \ref{grad} and Proposition \ref{prop5}.
\begin{thm}\label{annh7}
Let $N$ be a finitely generated $R$-module, $\fa$ an ideal of $R$ with $N\neq \fa N$ and $n:=\grad{R} {\fa}N$.  Then
$$\textstyle{\ann_R(N/{\rm S}(\fa, N))\subseteq\ann_R(\h n{\fa}N)\subseteq\ann_R(N/\bigcap_{\fp\in\Sigma} C^N_\fp(0))\cap(\bigcap_{\fp\in\Sigma'}\fp)}$$
where   $\Sigma:=\{\fp\in\mass_R(N): \Ht {R/\fp}{(\fa+\fp)/\fp}=n\}$ and $\Sigma':=\{\fp\in\ass_R(N)\setminus\mass_R(N): \Ht {R/\fp}{(\fa+\fp)/\fp}=n\}$.
\end{thm}
\begin{proof}
The lower bound for the annihilator of $\h n{\fa}N$ follows from Theorem \ref{lower}(iii). Also, it follows from Lemmas \ref{iso} and \ref{annh6}  that
$$\ann_R(\h n{\fa}N)\subseteq\ann_R(\gam {\fp}{\h n{\fa}N})=\ann_R(\h n{\fa+\fp}N)\subseteq\ann_R(N/C^N_{\fp}(0))$$
for all $\fp\in \Sigma$. Similarly, $\ann_R(\h n{\fa}N)\subseteq\fq$  for all $\fq\in\Sigma'$. Hence
\begin{align*}
\ann_R(\h n{\fa}N)&\textstyle{\subseteq\bigcap_{\fp\in\Sigma}\ann_R(N/C^N_{\fp}(0))\cap(\bigcap_{\fq\in\Sigma'}\fq)}\\
&=\textstyle{\ann_R(N/\bigcap_{\fp\in\Sigma} C^N_\fp(0))\cap(\bigcap_{\fq\in\Sigma'}\fq).}
\end{align*}
 \end{proof}

\begin{cor}\label{cor3}
Let $N$ be a finitely generated Cohen-Macaulay $R$-module, $\fa$ an ideal of $R$ with $N\neq \fa N$, $n:=\grad{R} {\fa}N$ and $\Sigma:=\{\fp\in\ass_R(N): \Ht {R/\fp}{(\fa+\fp)/\fp}=n\}$. Then
$$\textstyle{\ann_R(\h n{\fa}N)=\ann_R(N/\bigcap_{\fp\in\Sigma} C^N_\fp(0)).}$$
 In particular, $\Ht N{\ann_R(\h n{\fa}N)}=0$ and $\dim_R(R/\ann_R(\h n{\fa}N))=\dim_R(N)$.
\end{cor}
\begin{proof}
 Assume that $x\in R$ and $x\h n{\fa}N\neq 0$. Hence $(x\h n{\fa}N)_\fq\cong x/1\h n{\fa R_\fq}{N_\fq}\neq 0$ for some $\fq\in\ass_R(\h n{\fa}N)$. By \cite[Theorem 2.1]{ctt}, $\depth_{R_\fq}(N_\fq)=\grad{R}{\fa}N=n$ and so $\Ht {N}\fq=\Ht N{\fa}$. Therefore $\fq$ is a minimal prime ideal of $\fa+\ann_R(N)$ and hence $
 \h n{\fa R_\fq}{N_\fq}\cong\h n{\fq R_\fq}{N_\fq}$. Since $\dim_{R_\fq}(N_\fq)=n$, by \cite[Theorem 3.2(iii)]{f}, $\ann_{R_\fq}(\h n{\fq R_\fq}{N_\fq})=\ann_{R_\fq}(N_\fq)$. Thus $(xN)_\fq\neq 0$ and so there exists $\fp R_\fq\in\ass_{R_\fq}(N_\fq)$ such that $(xN_\fq)_{\fp R_\fq}\cong(xN)_\fp\neq 0$. Since $N_\fq$ is Cohen-Macaulay, $\dim_{R_\fq}(R_\fq/\fp R_\fq)=\dim_{R_\fq}(N_\fq)=n$ and so Lemma \ref{grad} yields
 $$n=\grad{R} {\fa}N\leq \Ht {R/\fp}{\fa+\fp}\leq\Ht {R/\fp}{\fq/\fp}=\dim_{R_\fq}(R_\fq/\fp R_\fq)=n.$$
 Therefore $\fp\in\Sigma$ and hence $(\bigcap_{\fp'\in\Sigma} C^N_{\fp'}(0))_\fp\subseteq (C^N_\fp(0))_\fp=0$. It follows that $(x(N/\bigcap_{\fp'\in\Sigma} C^N_{\fp'}(0)))_\fp\neq 0$ and so $xN\nsubseteq\bigcap_{\fp'\in\Sigma} C^N_{\fp'}(0)$. Since $x$ is an arbitrary element of $R$ with $x\h n{\fa}N\neq 0$, we have
 $\ann_R(N/\bigcap_{\fp'\in\Sigma} C^N_{\fp'}(0))\subseteq\ann_R(\h n{\fa}N)$. The reverse inclusion follows from Theorem \ref{annh7}.
 Finally since $\h n{\fa}N\neq 0$, $\Sigma\neq\emptyset$. Assume that $\fp\in\Sigma$. Then $$\ann_R(N)\subseteq\ann_R(\h c{\fa}N)\subseteq \ann_R(N/C^N_\fp(0))=C_\fp(\ann_R(N))\subseteq\fp.$$
  Since $\Ht N{\fp}=0$ and $\dim_R(R/\fp)=\dim_R(N)$, the above inclusions imply
 the last assertion.
\end{proof}


\begin{ex} \label{exa2} Let $K$ be a field and $R:=K[[x, y]]$ be the ring of formal power series over $K$ in indeterminates   $x, y$.  Set $N:=R/(Rx^2+Rxy), N_1:=Rx/(Rx^2+Rxy)$ and  $N_2:=(Rx^2+Ry)/(Rx^2+Rxy)$. Then $0=N_1\cap N_2$ is a minimal primary decomposition of the zero submodule of $N$ with $\ass_R(N/N_1)=\{\fp:=Rx\}$ and $\ass_R(N/N_2)=\{\fn:=Rx+Ry\}$. Thus $\ass_R(N)=\{\fp, \fn\}$.

 We have $\ext 1RNR\cong R/\fp$, $\ext 2RNR\cong R/\fn$ and $\ext iRNR=0$ for all $i\neq 1, 2$; see  \cite[Example 2.7]{f}. Therefore
 the Grothendieck's Duality Theorem (see \cite[Theorem 11.2.5 or 11.2.8]{bs}) implies that
$$\gam {\fn}N\cong\Hom R{\ext 2RNR}{E_R(R/\fn)}\cong\Hom R{R/\fn}{E_R(R/\fn)}\cong R/\fn,$$
\begin{align*}
&\h 1{\fn}N\cong\Hom R{\ext 1RNR}{E_R(R/\fn)}\cong\Hom R{R/\fp}{E_R(R/\fn)}\\
&\cong E_{R/\fp}(R/\fn)\cong E_{K[[y]]}(K)\cong K[y^{-1}]
\end{align*}
($K[y^{-1}]$ is a $K[[y]]$-module by the convention that $y^r.y^{-s}$ is equal $y^{-(s-r)}$ when $s\geq r$ and zero otherwise) and $\h i{\fn}N=0$ for all $i\neq 0, 1$. Thus $\depth_R(N)=0$, $\cdd R{\fn}N=\dim_R(N)=1$ and
 $$\ann_R(\h {\depth_R(N)}{\fn}N)=Rx+Ry,$$
 $$\ann_R(\h {\dim_R(N)}{\fn}N)= \ann_R(\Hom R{R/\fp}{E_R(R/\fn)})= \ann_R(R/\fp)=Rx.$$
 On the other hand, $\supp_R(N)=\V(\fp)\cup\V(\fn)=\{\fp, \fn\}$ and since $\fp$ is a minimal element of $\ass_R(N)$,  $C^N_\fp(0)=N_1=Rx/(Rx^2+Rxy)$. Also, it is clear that $C^N_\fn(0)=0$. Now assume that $\Delta $ is a subset of $\supp_R(N)$. Then
 $$\textstyle{\ann_R(N/\bigcap_{\fq\in\Delta}C^N_\fq(0))}=\left\{\begin{array}{lll}
R& \textrm{ if } \Delta=\emptyset,\\
 Rx&\textrm{ if }\Delta=\{\fp\},\\
 Rx^2+Rxy&\textrm{ otherwise. }
 \end{array}\right. $$
 Therefore the following statements hold.

 (i) There is not a subset $\Delta$ of $\supp_R(N)$ such that $\ann_R(\h {\depth_R(N)}{\fn}N)=\ann_R(N/\bigcap_{\fq\in\Delta}C^N_\fq(0)).$

 (ii) By setting $\fa:=\fn$, this example shows that to improve the upper bound for the annihilator of $\h {\grad{R} {\fa}N}{\fa}N$ in Theorem \ref{annh7}, we can not replace $\mass_R(N)$ by $\ass_R(N)$ in the index set $\Sigma$.
\end{ex}
In the following remark, we consider the analogue versions of Theorems \ref{annh2}, \ref{annh4} at $\grad{R} {\fa}N$ instead of at $\cdd R{\fa}N$.

\begin{rem} Let $N$ be a finitely generated $R$-module,  $\fa$ an ideal of $R$ with $N\neq\fa N$, $n:=\grad{R} {\fa}N$ and $c:=\cdd R{\fa}N$.

(i) In Theorem \ref{annh2}, we see that $\ann_R(\h c{\fa}N)\subseteq\ann_R(N/\bigcap_{\fq\in\Sigma}C^N_\fq(0))$, where $\Sigma:=\{\fq\in\supp_R(N): \cdd R{\fa}{R/\fq}=\dim_R(R/\fq)=c \}$.
Now assume that $R$, $\fp$, $\fn$ and  $N$ are as in Example \ref{exa2} and we set $\fa:=\fn$,
\begin{align*}
&\Sigma_1:=\{\fq\in\supp_R(N): \cdd R{\fa}{R/\fq}=\dim_R(R/\fq)=n \},\\
&\Sigma_2:=\{\fq\in\supp_R(N): \cdd R{\fa}{R/\fq}=\depth_R(R/\fq)=n \},\\
&\Sigma_3:=\{\fq\in\supp_R(N): \grad{R} {\fa}{R/\fq}=\dim_R(R/\fq)=n \},\\
&\Sigma_4:=\{\fq\in\supp_R(N): \grad{R} {\fa}{R/\fq}=\depth_R(R/\fq)=n \}.
\end{align*}
By above example, we have $\ann_R(N/\bigcap_{\fq\in\Sigma_i}C^N_\fq(0))=Rx^2+Rxy$ for all $1\leq i\leq 4$ and $\ann_R(\h n{\fa}N)=Rx+Ry$. Hence
$$\textstyle{\ann_R(\h n{\fa}N)\nsubseteq\ann_R(N/\bigcap_{\fp\in\Sigma_i}C^N_\fp(0))}$$
for all $1\leq i\leq 4$.

(ii) Assume again that $R$, $\fp$, $\fn$ and  $N$ are as in Example \ref{exa2} and we set $\fa:=\fn$. Since $\fa$ is the maximal ideal of $R$, $(0:_{\h n{\fa}{N/N'}}\fa)$ has finite length for all submodules $N'$ of $N$. But, by Example \ref{exa2}, there is not a subset $\Sigma'$ of $\supp_R(N)$ such that
$\ann_R(\h n{\fa}N)=\ann_R(N/\bigcap_{\fp\in\Sigma'}C^N_\fp(0))$. Therefore the analogue version of Theorem \ref{annh4} does  hold at $\grad{R} {\fa}N$ instead of at $\cdd R{\fa}N$.
\end{rem}
\begin{prop}\label{prop3}
Let $(R, \fn)$  be a homomorphic image of a Cohen-Macaulay local ring, $N$  a non-zero finitely generated $R$-module and $t\in\N_0$ is such that $\h t{\fn}N\neq 0$. Then
$$\dim_R(R/\ann_R(\h t{\fn}N)\leq t.$$
Equality holds whenever there exists $\fp\in\ass_R(N)$ with $\dim_R(R/\fp)=t$.
\end{prop}
\begin{proof}
Since $\surd(\ann_R(\h t{\fn}N))=\bigcap_{\fp\in\att_R(\h t{\fn}N)}\fp$ and $\att_R(\h t{\fn}N)$ is a finite set, we have $\dim_R(R/\ann_R(\h t{\fn}N)=\dim_R(R/\fp)$ for some $\fp\in\att_R(\h t{\fn}N)$. By \cite[Corollary 1.2]{k}, $R$ is universally catenary and all its formal fibers are Cohen-Macaulay. Thus \cite[Theorem 1.1]{nq} implies that $\fp R_\fp\in\att_{R_\fp}(\h {t-\dim_R(R/\fp)}{\fp R_\fp}{N_\fp})$. Hence $t-\dim_R(R/\fp)\geq 0$ and so $$\dim_R(R/\ann_R(\h t{\fn}N))\leq t.$$
If there exists $\fq\in\ass_R(N)$ with $\dim_R(R/\fq)=t$, then \cite[Corollary 4.9]{s} implies that $\fq\in\att_R(\h t{\fn}N)$ and so $\ann_R(\h t{\fn}N)\subseteq\fq$. Thus $\dim_R(R/\ann_R(\h t{\fn}N))\geq\dim_R(R/\fq)=t$ and so the equality holds.
\end{proof}
The following example shows that there is a local ring $(A, \fn)$ which is a homomorphic image of a complete regular local ring  such that
 $$\dim_A(A/\ann_A(\h {\depth_A(A)}{\fn}{A}))<\depth_A(A)=\depth_A(A/\gam {\fn}A).$$
 Therefore the inequality in Proposition \ref{prop3} may be strict. Also, the analogue version of Lynch's conjecture is not true for $\h {\depth_A(A)}{\fn}A$.
\begin{ex}\label{exa3}
Let $K$ be a field and let $R:=K[[x, y, z, w]]$ be the ring of formal power series over  $K$ in indeterminates $x, y, z, w$. We set $\fm:=(x, y, z, w)$ and   $I:=(x, y)\cap(z, w)$. Then $A:=R/I$ is a local ring with maximal ideal $\fn:=\fm/I$. By \cite[Example 2.8]{f} and the Independence Theorem, we have $\gam {\fn}{A}\cong\gam {\fm}{R/I}=0$ and
$\h 1{\fn}{A}\cong\h 1{\fm}{R/I}\cong R/\fm\cong A/\fn$. Therefore $\depth_A(A)=\depth_A(A/\gam {\fn}A)=1$ and
$$\dim_A(A/\ann_A(\h {\depth_A(A)}{\fn}{A}))=\dim_A(A/\fn)=0.$$
Therefore, in general, $\dim_A(A/\ann_A(\h {\depth_A(A)}{\fn}{A}))$ is not equal $\depth_A(A)$ or $\depth_A(A/\gam {\fn}A)$ and the inequality in Proposition \ref{prop3} may be strict.
\end{ex}

Let $(R, \fn)$ be a local ring and $N$ a non-zero finitely generated $R$-module. Then, for each $\fp\in\ass_R(N)$,  $\depth_R(N)\leq\dim_R(R/\fp)$. We say that $N$ has {\it maximal depth} if $\depth_R(N)=\dim_R(R/\fp)$ for some $\fp\in\ass_R(N)$. Cohen-Macaulay modules and sequentially Cohen-Macaulay modules have maximal depth; see \cite{r2} for  more details. Now assume in addition that $R$ is a homomorphic image of a Cohen-Macaulay local ring.   Proposition \ref{prop3} shows that $\dim_R(R/\ann_R(\h {\depth_R(N)}\fn N))\leq\depth_R(N)$ and Example \ref{exa3} shows that this inequality may be strict. In  the following corollary we see that the equality holds if $N$ has maximal depth.

\begin{cor} \label{prop4} Let $(R, \fn)$ be a homomorphic image of a Cohen-Macaulay local ring and let $N$  be a non-zero finitely generated $R$-module which has maximal depth. Then
$$\dim_R(R/\ann_R(\h {\depth_R(N)}{\fn} N))=\depth_R(N).$$
\end{cor}
\begin{proof} It is an immediate consequence of Proposition \ref{prop3}.
\end{proof}
\section*{Acknowledgements}
 The author is deeply grateful to the referee for his/her  valuable comments and  suggestions.

\end{document}